\newcommand{\mcal}[1]{\mathcal{#1}}
\newcommand{\mb}[1]{\mathbb{#1}}
\newcommand{\pl}{\partial}
\newtheorem{theo}{Theorem}
\newcommand{\argmin}{\mathop{\rm argmin}\limits}
\title{\LARGE \bf
Memory-Limited Partially Observable Stochastic Control and its Mean-Field Control Approach
}
\author{Takehiro Tottori$^{1}$ and Tetsuya J. Kobayashi$^{1,2,3,4}$
\thanks{$^{1}$Department of Mathematical Informatics, Graduate School of Information Science and Technology, The University of Tokyo, Tokyo 113-8654, Japan}
\thanks{$^{2}$Department of Electrical Engineering and Information Systems, Graduate School of Engineering, The University of Tokyo, Tokyo 113-8654, Japan}
\thanks{$^{3}$Institute of Industrial Science, The University of Tokyo, Tokyo 153-8505, Japan}
\thanks{$^{4}$Universal Biology Institute, The University of Tokyo, Tokyo 113-8654, Japan}
}
\begin{document}
\maketitle
\thispagestyle{empty}
\pagestyle{empty}

\begin{abstract}
Control problems with incomplete information and memory limitation appear in many practical situations. Although partially observable stochastic control (POSC) is a conventional theoretical framework that considers the optimal control problem with incomplete information, it cannot consider memory limitation. Furthermore, POSC cannot be solved in practice except in the special cases. In order to address these issues, we propose an alternative theoretical framework, memory-limited POSC (ML-POSC). ML-POSC directly considers memory limitation as well as incomplete information, and it can be solved in practice by employing  the mathematical technique of the mean-field control theory. ML-POSC can generalize the LQG problem to include memory limitation. Because estimation and control are not clearly separated in the LQG problem with memory limitation, the Riccati equation is modified to the partially observable Riccati equation, which improves estimation as well as control. Furthermore, we demonstrate the effectiveness of ML-POSC to a non-LQG problem by comparing it with the local LQG approximation. 
\end{abstract}

\section{INTRODUCTION}
Control problems of systems with incomplete information and  memory limitation appear in many practical situations. 
These constraints become predominant, especially in designing the control of small devices \cite{fox_minimum-information_2016,fox_minimum-information_2016-1} and also in understanding the control mechanisms in biological systems \cite{li_iterative_2006,li_iterative_2007,nakamura_connection_2021,nakamura_optimal_2022,pezzotta_chemotaxis_2018,borra_optimal_2021} because their sensors are extremely noisy and their controllers can only have severely limited memories. 

Partially observable stochastic control (POSC) is a conventional theoretical framework that considers the optimal control problem with one of these constraints; the incomplete information of the system state \cite{bensoussan_stochastic_1992}. 
Because the controller of POSC cannot completely observe the state of the system, it determines the control based on the noisy observation history of the state. 
POSC can be solved in principle \cite{yong_stochastic_1999,nisio_stochastic_2015} by converting it to a completely observable stochastic control (COSC) of the posterior probability of the state because the posterior probability is the sufficient statistics of the observation history. 
The posterior probability and the optimal control are obtained by solving the Zakai equation and the Bellman equation, respectively.

However, POSC has three practical problems with respect to the implementation of the controller, which originate from the ignorance of the other constraint; the memory limitation of the controller. 
First, the controller designed by POSC should ideally have an infinite-dimensional memory to store and compute the posterior probability from the observation history. 
Second, the memory of the controller cannot have intrinsic stochasticity other than the observation noise to accurately compute the posterior probability via the Zakai equation. 
Third, POSC does not consider the cost originating from the memory update, which can be regarded as a cost for estimation. 
In the light of the dualistic roles played by estimation and control, considering only control cost by ignoring estimation cost is asymmetric. 
As a result, POSC is not practical for the control problem where the memory size, noise, and cost are not negligible. 
Therefore, we need an alternative theoretical framework considering memory limitation to circumvent these three problems. 

Furthermore, POSC has another crucial problem in obtaining the optimal state control by solving the Bellman equation. 
Because the posterior probability of the state is infinite-dimensional, POSC corresponds to an infinite-dimensional COSC. 
In the infinite-dimensional COSC, the Bellman equation becomes a functional differential equation, which should be solved to obtain the optimal state control. 
However, solving a functional differential equation is generally intractable even numerically. 

In this work, we propose an alternative theoretical framework to the conventional POSC, which can address the above-mentioned two issues. 
We call it memory-limited POSC (ML-POSC), in which memory limitation as well as incomplete information are directly accounted. 
The conventional POSC derives the Zakai equation without considering memory limitation. 
Then, the optimal state control is supposed to be derived by solving the Bellman equation (even though we do not have any practical way to do it). 
In contrast, ML-POSC first postulates the finite-dimensional and stochastic memory dynamics explicitly by taking the memory limitation into account and then jointly optimizes the memory dynamics and state control by considering the memory and control costs. 
As a result, unlike the conventional POSC, ML-POSC finds the optimal state control and the optimal memory dynamics under the given memory limitation. 
Furthermore, we show that the Bellman equation of ML-POSC can be reduced to the Hamilton-Jacobi-Bellman (HJB) equation by employing a trick in the mean-field control theory \cite{bensoussan_master_2015,bensoussan_interpretation_2017,bensoussan_mean_2021-1}. 
While the Bellman equation is a functional differential equation, the HJB equation is a partial differential equation. 
As a result, ML-POSC can be solved at least numerically.

The idea behind ML-POSC is closely related to that of the finite-state controller \cite{hansen_improved_1998,hansen_solving_1998,kaelbling_planning_1998,meuleau_solving_1999,meuleau_learning_1999,poupart_bounded_2003,amato_finite-state_2010}. 
The finite-state controller has been studied in Partially Observable Markov Decision Process (POMDP), which is the discrete time and state POSC.  
The finite-dimensional memory of ML-POSC can be regarded as an extension of the finite-state controller of POMDP to the continuous time and state setting. 
Nonetheless, the algorithms of the finite-state controller cannot be directly extended to our setting because they strongly depend on discreteness. 
ML-POSC resolves this problem by employing the mathematical technique of the mean-field control theory. 

In the Linear-Quadratic-Gaussian (LQG) problem of the conventional POSC, the Zakai equation and the Bellman equation are reduced to the Kalman filter and the Riccati equation, respectively \cite{bensoussan_stochastic_1992,bensoussan_estimation_2018}. 
Because the infinite-dimensional Zakai equation is reduced to the finite-dimensional Kalman filter, the LQG problem of the conventional POSC can also be discussed in terms of ML-POSC. 
We show that the Kalman filter corresponds to the optimal memory dynamics of ML-POSC. 
Moreover, ML-POSC can generalize the LQG problem to include the memory limitation such as the memory noise and cost. 
Because estimation and control are not clearly separated in the LQG problem with memory limitation, the Riccati equation for control is modified to include estimation, which is called the partially observable Riccati equation in this paper. 
We demonstrate that the partially observable Riccati equation is superior to the conventional Riccati equation in the LQG problem with memory limitation. 

Then, we investigate the potential effectiveness of ML-POSC to a non-LQG problem by comparing it with the local LQG approximation of the conventional POSC \cite{li_iterative_2006,li_iterative_2007}. 
In the local LQG approximation, the Zakai equation and the Bellman equation are locally approximated by the Kalman filter and the Riccati equation, respectively. 
Because the Bellman equation (functional differential equation) is reduced to the Riccati equation (ordinary differential equation), the local LQG approximation can be solved numerically. 
However, the performance of the local LQG approximation may be poor in a highly non-LQG problem because the local LQG approximation ignores non-LQG information. 
In contrast, ML-POSC reduces the Bellman equation (functional differential equation) to the HJB equation (partial differential equation) while maintaining non-LQG information. 
We demonstrate that ML-POSC can provide a better result than the local LQG approximation in a non-LQG problem.

This paper is organized as follows: 
In Sec. \ref{sec: Review of POSC}, we briefly review the conventional POSC. 
In Sec. \ref{sec: ML-POSC}, we formulate ML-POSC. 
In Sec. \ref{sec: MFC approach}, we propose the mean-field control approach to ML-POSC. 
In Sec. \ref{sec: LQG problem without memory limitation}, we investigate the LQG problem of the conventional POSC based on ML-POSC. 
In Sec. \ref{sec: LQG problem with memory limitation}, we generalize the LQG problem to include memory limitation. 
In Sec. \ref{sec: Numerical experiments}, we show the numerical experiments of a LQG problem with memory limitation and a non-LQG problem.  
In Sec. \ref{sec: Conclusion}, we conclude this paper. 

\section{REVIEW OF PARTIALLY OBSERVABLE STOCHASTIC CONTROL}\label{sec: Review of POSC}
In this section, we briefly review the conventional POSC \cite{nisio_stochastic_2015,bensoussan_mean_2021-1}. 

\subsection{Problem formulation}
In this subsection, we formulate the conventional POSC \cite{nisio_stochastic_2015,bensoussan_mean_2021-1}. 
The state $x_{t}\in\mb{R}^{d_{x}}$ and the observation $y_{t}\in\mb{R}^{d_{y}}$ at time $t\in[0,T]$ evolve by the following stochastic differential equations (SDEs): 
\begin{align}
	dx_{t}&=b(t,x_{t},u_{t})dt+\sigma(t,x_{t},u_{t})d\omega_{t},\label{eq: state SDE}\\
	dy_{t}&=h(t,x_{t})dt+\gamma(t)d\nu_{t},\label{eq: observation SDE}
\end{align}
where $x_{0}$ and $y_{0}$ obey $p_{0}(x_{0})$ and $p_{0}(y_{0})$, respectively, $\omega_{t}\in\mb{R}^{d_{\omega}}$ and $\nu_{t}\in\mb{R}^{d_{\nu}}$ are independent standard Wiener processes, and $u_{t}\in\mb{R}^{d_{u}}$ is the control. 
In POSC, because the controller cannot completely observe the state $x_{t}$, the control $u_{t}$ is determined based on the observation history $y_{0:t}:=\{y_{\tau}|\tau\in[0,t]\}$ as follows: 
\begin{align}
	u_{t}=u(t,y_{0:t}).\label{eq: control of POSC}
\end{align}

The objective function of POSC is given by the following expected cumulative cost function: 
\begin{align}	
	J[u]:=\mb{E}_{p(x_{0:T},y_{0:T};u)}\left[\int_{0}^{T}f(t,x_{t},u_{t})dt+g(x_{T})\right],
	\label{eq: OF of POSC}
\end{align}
where $f$ is the cost function, $g$ is the terminal cost function, $p(x_{0:T},y_{0:T};u)$ is the probability of $x_{0:T}$ and $y_{0:T}$ given $u$ as a parameter, and $\mb{E}_{p}\left[\cdot\right]$ is the expectation with respect to the probability $p$. 

POSC is the problem to find the optimal control function $u^{*}$ that minimizes the objective function $J[u]$ as follows: 
\begin{align}	
	u^{*}:=\argmin_{u}J[u].
	\label{eq: POSC}
\end{align}

\subsection{Derivation of optimal control function}
In this subsection, we briefly review the derivation of the optimal control function of the conventional POSC \cite{nisio_stochastic_2015,bensoussan_mean_2021-1}. 
We first define the unnormalized posterior probability density function $q_{t}(x):=p(x_{t}=x,y_{0:t})$. 
We omit $y_{0:t}$ for the notational simplicity. 
$q_{t}(x)$ obeys the following Zakai equation: 
\begin{align}
	dq_{t}(x)=\mcal{L}^{\dag}q_{t}(x)+q_{t}(x)h^{T}(t,x)(\gamma(t)\gamma^{T}(t))^{-1}dy_{t}, 
	\label{eq: Zakai eq}
\end{align}
where $q_{0}(x)=p_{0}(x)p_{0}(y)$, and $\mcal{L}^{\dag}$ is the forward diffusion operator, which is defined by
\begin{align}
	\mcal{L}^{\dag}q(x)&:=-\sum_{i=1}^{d_{x}}\frac{\pl (b_{i}(t,x,u)q(x))}{\pl x_{i}}
	+\frac{1}{2}\sum_{i,j=1}^{d_{x}}\frac{\pl^{2}(D_{ij}(t,x,u)q(x))}{\pl x_{i}\pl x_{j}}, 
	\label{eq: forward diffusion operator}
\end{align}
where $D(t,x,u):=\sigma(t,x,u)\sigma^{T}(t,x,u)$. 
Then, the objective function (\ref{eq: OF of POSC}) can be calculated as follows: 
\begin{align}
	J[u]=\mb{E}_{p(q_{0:T};u)}\left[\int_{0}^{T}\bar{f}(t,q_{t},u_{t})dt+\bar{g}(q_{T})\right], 
	\label{eq: OF of Zakai eq}
\end{align}
where $\bar{f}(t,q,u):=\mb{E}_{q(x)}\left[f(t,x,u)\right]$ and $\bar{g}(q):=\mb{E}_{q(x)}\left[g(x)\right]$. 
From (\ref{eq: Zakai eq}) and (\ref{eq: OF of Zakai eq}), POSC is converted into a completely observable stochastic control (COSC) of $q_{t}$. 
As a result, POSC can be approached in the similar way as COSC, and the optimal control function is given by the following theorem: 

\begin{theo}
The optimal control function of POSC is given by
\begin{align}
	u^{*}(t,q)=\argmin_{u}\mb{E}_{q(x)}\left[H\left(t,x,u,\frac{\delta V(t,q)}{\delta q}(x)\right)\right], 
	\label{eq: optimal control of POSC}
\end{align}
where $H$ is the Hamiltonian, which is defined by
\begin{align}
	H\left(t,x,u,\frac{\delta V(t,q)}{\delta q}(x)\right):=f(t,x,u)+\mcal{L}\frac{\delta V(t,q)}{\delta q}(x), 
	\label{eq: Hamiltonian}
\end{align}
where $\mcal{L}$ is the backward diffusion operator, which is defined by 
\begin{align}
	\mcal{L}q(x)&:=\sum_{i=1}^{d_{x}}b_{i}(t,x,u)\frac{\pl q(x)}{\pl x_{i}}
	+\frac{1}{2}\sum_{i,j=1}^{d_{x}}D_{ij}(t,x,u)\frac{\pl^{2} q(x)}{\pl x_{i}\pl x_{j}}.
	\label{eq: backward diffusion operator}
\end{align}
We note that $\mcal{L}$ is the conjugate of $\mcal{L}^{\dag}$. 
$V(t,q)$ is the value function, which is the solution of the following Bellman equation: 
\begin{align}
	&-\frac{\pl V(t,q)}{\pl t}=\mb{E}_{q(x)}\left[H\left(t,x,u^{*},\frac{\delta V(t,q)}{\delta q}(x)\right)\right]\nonumber\\
	&+\frac{1}{2}\mb{E}_{q(x)q(x')}\left[\frac{\delta}{\delta q}\frac{\delta V(t,q)}{\delta q}(x,x')h^{T}(t,x)(\gamma(t)\gamma^{T}(t))^{-1}h(t,x')\right], 
	\label{eq: Bellman eq of POSC}
\end{align}
where $V(T,q)=\mb{E}_{q(x)}\left[g(x)\right]$. 
\end{theo}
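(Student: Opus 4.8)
The plan is to treat the reformulation of POSC as an infinite-dimensional completely observable stochastic control problem with the density-valued state $q_{t}$, and to apply dynamic programming to it. First I would define the value function
\begin{align}
	V(t,q):=\inf_{u}\mb{E}\left[\int_{t}^{T}\bar{f}(s,q_{s},u_{s})ds+\bar{g}(q_{T})\right],
\end{align}
where $q_{t}=q$ is the initial condition and $q_{s}$ for $s\ge t$ evolves according to the Zakai equation (\ref{eq: Zakai eq}). By construction $V(T,q)=\bar{g}(q)=\mb{E}_{q(x)}[g(x)]$, which is the stated terminal condition.

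The next step is the dynamic programming principle: for an infinitesimal increment $dt$,
\begin{align}
	V(t,q)=\inf_{u}\mb{E}\left[\int_{t}^{t+dt}\bar{f}(s,q_{s},u_{s})ds+V(t+dt,q_{t+dt})\right].
\end{align}
To expand the last term I would apply an Itô formula to the functional $V(t,q_{t})$ along the density-valued process $q_{t}$. Writing (\ref{eq: Zakai eq}) as $dq_{t}=\mcal{L}^{\dag}q_{t}\,dt+q_{t}h^{T}(t,x)(\gamma(t)\gamma^{T}(t))^{-1}dy_{t}$ and using that the driving observation has quadratic variation $d[y]_{t}=\gamma(t)\gamma^{T}(t)\,dt$, the functional expansion produces the time derivative $\pl V/\pl t$, a first-order (drift) term
\begin{align}
	\int\frac{\delta V(t,q)}{\delta q}(x)\,\mcal{L}^{\dag}q(x)\,dx,
\end{align}
and a second-order term assembled from the second functional derivative $\frac{\delta}{\delta q}\frac{\delta V(t,q)}{\delta q}(x,x')$ contracted with the covariation of the noise coefficient $q(x)h^{T}(t,x)(\gamma(t)\gamma^{T}(t))^{-1}$.

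I would then simplify each piece. For the drift term, integrating by parts and invoking that $\mcal{L}$ is the conjugate of $\mcal{L}^{\dag}$ rewrites it as $\mb{E}_{q(x)}[\mcal{L}\,\frac{\delta V}{\delta q}(x)]$; adding the running cost $\bar{f}(t,q,u)=\mb{E}_{q(x)}[f(t,x,u)]$ assembles exactly the expected Hamiltonian $\mb{E}_{q(x)}[H(t,x,u,\frac{\delta V}{\delta q}(x))]$ of (\ref{eq: Hamiltonian}). For the second-order term, computing the covariation of $q(x)h^{T}(t,x)(\gamma\gamma^{T})^{-1}dy_{t}$ with its $x'$-counterpart produces the factor $q(x)q(x')h^{T}(t,x)(\gamma\gamma^{T})^{-1}h(t,x')$, which reproduces the final line of (\ref{eq: Bellman eq of POSC}). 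Crucially, since the observation map $h(t,x)$ does not depend on the control $u$, this second-order term is $u$-independent, so the infimum in the dynamic programming principle acts only on the Hamiltonian term; performing it yields both the optimal control (\ref{eq: optimal control of POSC}) and the Bellman equation (\ref{eq: Bellman eq of POSC}).

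The main obstacle I anticipate is the rigorous justification of the functional Itô formula on the space of unnormalized densities: one must make precise the linear functional derivatives $\frac{\delta V}{\delta q}$ and $\frac{\delta}{\delta q}\frac{\delta V}{\delta q}$, establish enough regularity of $V$ for the expansion to hold, and control the infinite-dimensional noise term. The algebraic identifications above are routine once the expansion is granted; the analytic work of defining these derivatives and validating the dynamic programming principle (including a measurable selection realizing the $\argmin$) is where the real difficulty lies, and is precisely why this Bellman equation remains a formal, practically intractable characterization that the paper later circumvents through the mean-field reduction.
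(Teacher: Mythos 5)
Your proposal is correct at the customary formal level and follows essentially the same route as the paper's proof, which is deferred to the cited references (Nisio; Bensoussan): view the Zakai dynamics as an infinite-dimensional completely observable control problem, apply dynamic programming with a functional It\^{o} expansion of $V(t,q_{t})$, use the conjugacy of $\mcal{L}$ and $\mcal{L}^{\dag}$ to form the expected Hamiltonian, and observe that the second-order (observation-noise) term is control-independent so the infimum acts only on the Hamiltonian. The only point worth making explicit is that the expectation in your value function and the vanishing of the first-order $dy_{t}$-integral are taken under the reference measure (under which $y_{t}$ is a martingale with quadratic variation $\gamma\gamma^{T}dt$), which is implicit in the paper's representation of $J[u]$ as an expectation over the law of $q_{0:T}$.
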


\begin{proof}
The proof is shown in \cite{nisio_stochastic_2015,bensoussan_mean_2021-1}. 
\end{proof}

The optimal control function $u^{*}(t,q)$ is obtained by solving the Bellman equation (\ref{eq: Bellman eq of POSC}). 
The controller determines the optimal control $u_{t}^{*}=u^{*}(t,q_{t})$ based on the posterior probability $q_{t}$. 
The posterior probability $q_{t}$ is obtained by solving the Zakai equation (\ref{eq: Zakai eq}). 
As a result, POSC can be solved in principle. 

However, POSC has three practical problems with respect to the memory of the controller. 
First, the controller should have an infinite-dimensional memory to store and compute the posterior probability $q_{t}$ from the observation history $y_{0:t}$. 
Second, the memory of the controller cannot have intrinsic stochasticity other than the observation $dy_{t}$ to accurately compute the posterior probability $q_{t}$ via the Zakai equation (\ref{eq: Zakai eq}). 
Third, POSC does not consider the cost originating from the memory update, which can be regarded as a cost for estimation. 
In light of the dualistic roles played by estimation and control, considering only control cost by ignoring estimation cost is asymmetric. 
As a result, POCS is not practical for the control problem where the memory size, noise, and cost are not negligible. 

Furthermore, POSC has another crucial problem in obtaining the optimal control function $u^{*}(t,q)$ by solving the Bellman equation (\ref{eq: Bellman eq of POSC}). 
Because the posterior probability $q$ is infinite-dimensional, the associated Bellman equation (\ref{eq: Bellman eq of POSC}) becomes a functional differential equation. 
However, solving a functional differential equation is generally intractable even numerically. 
As a result, POCS cannot be solved in practice. 

\section{MEMORY-LIMITED PARTIALLY OBSERVABLE STOCHASTIC CONTROL}\label{sec: ML-POSC}
In order to address the above-mentioned problems, we propose an alternative theoretical framework to the conventional POSC, ML-POSC. 
In this section, we formulate ML-POSC. 

\subsection{Problem formulation}
In this subsection, we formulate ML-POSC. 
ML-POSC determines the control $u_{t}$ based on the finite-dimensional memory $z_{t}\in\mb{R}^{d_{z}}$ as follows: 
\begin{align}
	u_{t}=u(t,z_{t}).\label{eq: control of ML-POSC}
\end{align}
The memory dimension $d_{z}$ is determined not by the optimization but by the prescribed memory limitation of the controller to be used. 
Comparing (\ref{eq: control of POSC}) and (\ref{eq: control of ML-POSC}), the memory $z_{t}$ can be interpreted as the compression of the observation history $y_{0:t}$. 
While the conventional POSC compresses the observation history $y_{0:t}$ into the infinite-dimensional posterior probability $q_{t}$, ML-POSC compresses it into the finite-dimensional memory $z_{t}$. 

ML-POSC formulates the memory dynamics with the following SDE: 
\begin{align}
	dz_{t}=c(t,z_{t},v_{t})dt+\kappa(t,z_{t},v_{t})dy_{t}+\eta(t,z_{t},v_{t})d\xi_{t},
	\label{eq: memory SDE}
\end{align}
where $z_{0}$ obeys $p_{0}(z_{0})$, $\xi_{t}\in\mb{R}^{d_{\xi}}$ is the standard Wiener process, and $v_{t}=v(t,z_{t})\in\mb{R}^{d_{v}}$ is the control for the memory dynamics. 
This memory dynamics has three important properties: 
(i) Because it depends on the observation $dy_{t}$, the memory $z_{t}$ can be interpreted as the compression of the observation history $y_{0:t}$. 
(ii)  Because it depends on the standard Wiener process $d\xi_{t}$, ML-POSC can consider the memory noise explicitly. 
(iii) Because it depends on the control $v_{t}$, it can be optimized through the control $v_{t}$. 

The objective function of ML-POSC is given by the following expected cumulative cost function: 
\begin{align}	
	J[u,v]:=\mb{E}_{p(x_{0:T},y_{0:T},z_{0:T};u,v)}\left[\int_{0}^{T}f(t,x_{t},u_{t},v_{t})dt+g(x_{T})\right]. 
	\label{eq: OF of ML-POSC}
\end{align}
Because the cost function $f$ depends on the memory control $v_{t}$ as well as the state control $u_{t}$, 
ML-POSC can consider the memory control cost (state estimation cost) as well as the state control cost explicitly.

ML-POSC optimizes the state control function $u$ and the memory control function $v$ based on the objective function $J[u,v]$ as follows: 
\begin{align}	
	u^{*},v^{*}:=\argmin_{u,v}J[u,v].
	\label{eq: ML-POSC}
\end{align}

ML-POSC first postulates the finite-dimensional and stochastic memory dynamics explicitly and then jointly optimizes the state and memory control function by considering the state and memory control cost. 
As a result, unlike the conventional POSC, ML-POSC can consider memory limitation as well as incomplete information. 

\subsection{Problem reformulation}
Although the formulation of ML-POSC in the previous subsection clarifies its relationship with that of the conventional POSC,  it is inconvenient for further mathematical investigations. 
In order to resolve this problem, we reformulate ML-POSC in this subsection. 
The formulation in this subsection is simpler and more general than that in the previous subsection. 

We first define the extended state $s_{t}$ as follows: 
\begin{align}
	s_{t}:=\left(\begin{array}{c}
		x_{t}\\ z_{t}\\
	\end{array}\right)\in\mb{R}^{d_{s}}, 
	\label{eq: extended state}
\end{align}
where $d_{s}=d_{x}+d_{z}$. 
The extended state $s_{t}$ evolves by the following SDE: 
\begin{align}
	ds_{t}=\tilde{b}(t,s_{t},\tilde{u}_{t})dt+\tilde{\sigma}(t,s_{t},\tilde{u}_{t})d\tilde{\omega}_{t}, 
	\label{eq: extended state SDE}
\end{align}
where $s_{0}$ obeys $p_{0}(s_{0})$, $\tilde{\omega}_{t}\in\mb{R}^{d_{\tilde{\omega}}}$ is the standard Wiener process, and $\tilde{u}_{t}\in\mb{R}^{d_{\tilde{u}}}$ is the control.
ML-POSC determines the control $\tilde{u}_{t}\in\mb{R}^{d_{\tilde{u}}}$ based solely on the memory $z_{t}$ as follows: 
\begin{align}
	\tilde{u}_{t}=\tilde{u}(t,z_{t}).\label{eq: control of GML-POSC}
\end{align}
The extended state SDE (\ref{eq: extended state SDE}) includes the previous state, observation, and memory SDEs (\ref{eq: state SDE}), (\ref{eq: observation SDE}), (\ref{eq: memory SDE}) as a special case because they can be represented as follows: 
\begin{align}
	&ds_{t}=\left(\begin{array}{c}
		b(t,x_{t},u_{t})\\
		c(t,z_{t},v_{t})+\kappa(t,z_{t},v_{t})h(t,x_{t})\\
	\end{array}\right)dt\nonumber\\
	&+\left(\begin{array}{ccc}
		\sigma(t,x_{t},u_{t})&O&O\\
		O&\kappa(t,z_{t},v_{t})\gamma(t)&\eta(t,z_{t},v_{t})\\
	\end{array}\right)
	\left(\begin{array}{c}
		d\omega_{t}\\
		d\nu_{t}\\
		d\xi_{t}\\
	\end{array}\right),
\end{align}
where $p_{0}(s_{0})=p_{0}(x_{0})p_{0}(z_{0})$.

The objective function of ML-POSC is given by the following expected cumulative cost function: 
\begin{align}	
	J[\tilde{u}]:=\mb{E}_{p(s_{0:T};\tilde{u})}\left[\int_{0}^{T}\tilde{f}(t,s_{t},\tilde{u}_{t})dt+\tilde{g}(s_{T})\right],
	\label{eq: OF of GML-POSC}
\end{align} 
where $\tilde{f}$ is the cost function, and $\tilde{g}$ is the terminal cost function.  
It is obvious that this objective function (\ref{eq: OF of GML-POSC}) is more general than the previous one (\ref{eq: OF of ML-POSC}). 

ML-POSC is the problem to find the optimal control function $\tilde{u}^{*}$ that minimizes the objective function $J[\tilde{u}]$ as follows: 
\begin{align}	
	\tilde{u}^{*}:=\argmin_{\tilde{u}}J[\tilde{u}].
	\label{eq: GML-POSC}
\end{align}

In the following section, we mainly consider the formulation of this subsection rather than that of the previous subsection because it is simpler and more general. 
Moreover, we omit $\tilde{\cdot}$ for the notational simplicity.

\section{MEAN-FIELD CONTROL APPROACH}\label{sec: MFC approach}
If the control $u_{t}$ is determined based on the extended state $s_{t}$, i.e., $u_{t}=u(t,s_{t})$, ML-POSC is the same with the COSC of the extended state $s_{t}$, and it can be solved by the conventional COSC approach \cite{yong_stochastic_1999}. 
However,  because ML-POSC determines the control $u_{t}$ based solely on the memory $z_{t}$, i.e., $u_{t}=u(t,z_{t})$, ML-POSC cannot be solved in the similar way as COSC. 
In order to solve ML-POSC, we propose the mean-field control approach in this section. 
Because the mean-field control approach is more general than the COSC approach, it can solve COSC and ML-POSC in a unified way.  

\subsection{Derivation of optimal control function}
In this subsection, we propose the mean-field control approach to ML-POSC. 
We first show that ML-POSC can be converted into a deterministic control of the probability density function, which is similar with the conventional POSC \cite{nisio_stochastic_2015,bensoussan_mean_2021-1}. 
This approach is also used in the mean-field control \cite{bensoussan_master_2015,bensoussan_interpretation_2017,lauriere_dynamic_2016,pham_bellman_2018}.
The extended state SDE (\ref{eq: extended state SDE}) can be converted into the following Fokker-Planck (FP) equation: 
\begin{align}
	\frac{\pl p_{t}(s)}{\pl t}=\mcal{L}^{\dagger}p_{t}(s), 
	\label{eq: FP eq}
\end{align}
where the initial condition is given by $p_{0}(s)$, and the forward diffusion operator $\mcal{L}^{\dagger}$ is defined by (\ref{eq: forward diffusion operator}). 
The objective function of ML-POSC (\ref{eq: OF of GML-POSC}) can be calculated as follows: 
\begin{align}
	J[u]=\int_{0}^{T}\bar{f}(t,p_{t},u_{t})dt+\bar{g}(p_{T}), 
	\label{eq: OF of FP eq}
\end{align}
where $\bar{f}(t,p,u):=\mb{E}_{p(s)}[f(t,s,u)]$ and 
$\bar{g}(p):=\mb{E}_{p(s)}[g(s)]$. 
From (\ref{eq: FP eq}) and (\ref{eq: OF of FP eq}), ML-POSC is converted into a deterministic control of $p_{t}$. 
As a result, ML-POSC can be approached in the similar way as deterministic control, and the optimal control function is given by the following theorem: 

\begin{theo}
\label{theo: optimal control of GML-POSC based on Bellman eq}
The optimal control function of ML-POSC is given by 
\begin{align}
	u^{*}(t,z)=\argmin_{u}\mb{E}_{p_{t}(x|z)}\left[H\left(t,s,u,\frac{\delta V(t,p_{t})}{\delta p}(s)\right)\right], 
	\label{eq: optimal control of GML-POSC based on Bellman eq}
\end{align}
where $H$ is the Hamiltonian (\ref{eq: Hamiltonian}), 
$p_{t}(x|z)=p_{t}(s)/\int p_{t}(s)dx$ is the conditional probability density function of the state $x$ given the memory $z$, 
$p_{t}(s)$ is the solution of the FP equation (\ref{eq: FP eq}), 
and $V(t,p)$ is the solution of the following Bellman equation: 
\begin{align}
	-\frac{\pl V(t,p)}{\pl t}=\mb{E}_{p(s)}\left[H\left(t,s,u^{*},\frac{\delta V(t,p)}{\delta p}(s)\right)\right], 
	\label{eq: Bellman eq of GML-POSC}
\end{align}
where $V(T,p)=\mb{E}_{p(s)}[g(s)]$. 
\end{theo}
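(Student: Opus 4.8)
The plan is to treat the reformulated problem defined by the Fokker--Planck equation (\ref{eq: FP eq}) and the objective (\ref{eq: OF of FP eq}) as a \emph{deterministic} optimal control problem on the space of probability densities $p_{t}$, and to apply dynamic programming while respecting the constraint that the control may depend on the memory $z$ alone. Note that, in contrast to the conventional POSC in which $q_{t}$ obeys the stochastic Zakai equation (\ref{eq: Zakai eq}), here $p_{t}$ is the full joint density and evolves deterministically; consequently no second-order functional-derivative term arises, which is exactly why the Bellman equation (\ref{eq: Bellman eq of GML-POSC}) is simpler than its POSC counterpart (\ref{eq: Bellman eq of POSC}). First I would introduce the value function
\begin{align*}
	V(t,p):=\min_{u}\left[\int_{t}^{T}\bar{f}(\tau,p_{\tau},u_{\tau})d\tau+\bar{g}(p_{T})\right],
\end{align*}
where $p_{\tau}$ solves (\ref{eq: FP eq}) with $p_{t}=p$ and the minimization runs over admissible controls $u_{\tau}=u(\tau,z)$. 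The dynamic programming principle then yields, for infinitesimal $dt$,
\begin{align*}
	V(t,p)=\min_{u}\left[\bar{f}(t,p,u)dt+V(t+dt,p_{t+dt})\right]+o(dt).
\end{align*}

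Next I would expand $V(t+dt,p_{t+dt})$ to first order using the chain rule for the functional derivative together with the dynamics (\ref{eq: FP eq}),
\begin{align*}
	V(t+dt,p_{t+dt})=V(t,p)+\frac{\pl V(t,p)}{\pl t}dt+\int\frac{\delta V(t,p)}{\delta p}(s)\mcal{L}^{\dagger}p(s)ds\,dt+o(dt).
\end{align*}
Since $\mcal{L}$ is the conjugate of $\mcal{L}^{\dagger}$, the integral equals $\mb{E}_{p(s)}[\mcal{L}\frac{\delta V(t,p)}{\delta p}(s)]$. Combining this with $\bar{f}(t,p,u)=\mb{E}_{p(s)}[f(t,s,u)]$ and the definition of the Hamiltonian (\ref{eq: Hamiltonian}), then dividing by $dt$ and letting $dt\to0$, turns the dynamic programming relation into $-\frac{\pl V(t,p)}{\pl t}=\min_{u}\mb{E}_{p(s)}[H(t,s,u,\frac{\delta V(t,p)}{\delta p}(s))]$, where the time-derivative term factors out of the minimization because only $\mcal{L}$ carries the $u$-dependence. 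Substituting the minimizer $u^{*}$ characterized next gives exactly the Bellman equation (\ref{eq: Bellman eq of GML-POSC}), while the terminal condition $V(T,p)=\mb{E}_{p(s)}[g(s)]$ is immediate from the definition of $V$.

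The step deserving the most care, and the one that distinguishes ML-POSC from ordinary deterministic or mean-field control, is the extraction of the optimal control. Because $u(t,\cdot)$ is constrained to depend on $z$ alone, the pointwise-in-$s$ minimization available in unconstrained control is forbidden. Instead I would factor the density as $p(s)=p(z)p(x|z)$ and rewrite the expectation appearing in the Bellman equation as the iterated integral
\begin{align*}
	\mb{E}_{p(s)}\left[H\right]=\int p(z)\,\mb{E}_{p(x|z)}\left[H\left(t,s,u(t,z),\frac{\delta V(t,p)}{\delta p}(s)\right)\right]dz.
\end{align*}
Since $p(z)\ge0$ and the inner conditional expectation depends on the control only through its value $u(t,z)$ at that $z$, the nonnegatively weighted outer integral is minimized by minimizing the inner bracket separately for each $z$, which is precisely (\ref{eq: optimal control of GML-POSC based on Bellman eq}). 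I expect the main obstacle to be the rigorous underpinning of the functional-derivative calculus: assigning a precise meaning to $\delta V/\delta p$, justifying the chain rule on the manifold of densities, and establishing the regularity of $V$ that these manipulations presuppose. These issues are standard though delicate in the mean-field and master-equation literature, whereas the conditional-expectation structure special to ML-POSC follows cleanly once the memory constraint $u=u(t,z)$ is imposed.
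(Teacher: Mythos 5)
Your proposal is correct and follows essentially the same route as the paper's proof in Appendix A: define the value function on the space of densities, apply dynamic programming with the first-order functional chain rule along the deterministic FP flow, use the conjugacy of $\mcal{L}$ and $\mcal{L}^{\dagger}$ to obtain the Bellman equation, and then exchange the minimization with the expectation over $p(z)$ (your factorization $p(s)=p(z)p(x|z)$ with pointwise-in-$z$ minimization is exactly the justification the paper invokes for this exchange), finally evaluating at $p_{t}$ since the FP dynamics are deterministic. Your added remarks — why no second-order functional-derivative term appears (unlike the Zakai-based Bellman equation of POSC) and where the rigor burden lies — are consistent with the paper's discussion and do not change the argument.
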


\begin{proof}
The proof is shown in Appendix A. 
\end{proof}

The controller of ML-POSC determines the optimal control $u_{t}^{*}=u^{*}(t,z_{t})$ based on the memory $z_{t}$, not the posterior probability $q_{t}$. 
Therefore, ML-POSC can consider memory limitation as well as incomplete information. 

However, because the Bellman equation (\ref{eq: Bellman eq of GML-POSC}) is a functional differential equation, it cannot be solved even numerically, which is the same problem as the conventional POSC. 
We resolve this problem by employing a mathematical technique of the mean-field control theory \cite{bensoussan_master_2015,bensoussan_interpretation_2017} as follows:

\begin{theo}\label{theo: optimal control of GML-POSC}
The optimal control function of ML-POSC is given by
\begin{align}
	u^{*}(t,z)=\argmin_{u}\mb{E}_{p_{t}(x|z)}\left[H\left(t,s,u,w(t,s)\right)\right], 
	\label{eq: optimal control of GML-POSC}
\end{align}
where $H$ is the Hamiltonian (\ref{eq: Hamiltonian}), 
$p_{t}(x|z)=p_{t}(s)/\int p_{t}(s)dx$ is the conditional probability density function of the state $x$ given the memory $z$, 
$p_{t}(s)$ is the solution of the FP equation (\ref{eq: FP eq}), 
and $w(t,s)$ is the solution of the following Hamilton-Jacobi-Bellman (HJB) equation: 
\begin{align}
	-\frac{\pl w(t,s)}{\pl t}=H\left(t,s,u^{*},w(t,s)\right),
	\label{eq: HJB eq}
\end{align}
where $w(T,s)=g(s)$. 
\end{theo}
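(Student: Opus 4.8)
The plan is to build directly on Theorem \ref{theo: optimal control of GML-POSC based on Bellman eq}, which already expresses the optimal control through the functional derivative $\frac{\delta V(t,p_{t})}{\delta p}(s)$ of the value function evaluated along the Fokker--Planck flow $p_{t}$. Everything reduces to showing that this object coincides with the solution $w(t,s)$ of the HJB equation (\ref{eq: HJB eq}); the control formula (\ref{eq: optimal control of GML-POSC}) then follows at once by substituting $w(t,s)=\frac{\delta V(t,p_{t})}{\delta p}(s)$ into (\ref{eq: optimal control of GML-POSC based on Bellman eq}). This is the ``characteristics of the master equation'' idea from mean-field control theory: the infinite-dimensional Bellman equation for $V(t,p)$ is reduced to a finite-dimensional PDE for its first functional derivative transported along the measure flow. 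The reduction is available precisely because the Bellman equation (\ref{eq: Bellman eq of GML-POSC}) is first-order in $p$ (it lacks the second-order functional term present in the POSC Bellman equation of Theorem 1), since here $p_{t}$ evolves deterministically by (\ref{eq: FP eq}).

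Concretely, I would set $w(t,s):=\frac{\delta V(t,p_{t})}{\delta p}(s)$ and compute its total time derivative. Since $w$ depends on $t$ both explicitly and through $p_{t}$, the chain rule gives
\begin{align}
	\frac{d}{dt}w(t,s)=\frac{\pl}{\pl t}\frac{\delta V(t,p_{t})}{\delta p}(s)+\int \frac{\delta}{\delta p}\frac{\delta V(t,p_{t})}{\delta p}(s,s')\frac{\pl p_{t}(s')}{\pl t}ds'. \nonumber
\end{align}
The first term is supplied by taking $\frac{\delta}{\delta p}$ of the Bellman equation (\ref{eq: Bellman eq of GML-POSC}) and evaluating along $p_{t}$, which yields $\frac{\pl}{\pl t}\frac{\delta V(t,p_{t})}{\delta p}(s)=-H(t,s,u^{*},w(t,s))-\int p_{t}(s')\mcal{L}\frac{\delta}{\delta p}\frac{\delta V(t,p_{t})}{\delta p}(s',s)ds'$, where $\mcal{L}$ acts on $s'$ and comes from the $\mcal{L}$ inside the Hamiltonian acting on the functional derivative. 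For the second term I would insert $\frac{\pl p_{t}}{\pl t}=\mcal{L}^{\dagger}p_{t}$ from the FP equation (\ref{eq: FP eq}) and integrate by parts, using that $\mcal{L}$ is the conjugate of $\mcal{L}^{\dagger}$ together with the symmetry of the second functional derivative; this turns it into $+\int p_{t}(s')\mcal{L}\frac{\delta}{\delta p}\frac{\delta V(t,p_{t})}{\delta p}(s',s)ds'$. The two second-derivative integrals cancel, leaving $\frac{d}{dt}w(t,s)=-H(t,s,u^{*},w(t,s))$, which is exactly (\ref{eq: HJB eq}). The terminal condition is immediate: $V(T,p)=\mb{E}_{p(s)}[g(s)]$ is linear in $p$, so its functional derivative is $g(s)$ and $w(T,s)=g(s)$.

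The step requiring the most care is differentiating the Bellman equation while correctly discarding the implicit $p$-dependence of $u^{*}$. When $\frac{\delta}{\delta p}$ hits $u^{*}$, it produces a contribution proportional to $\mb{E}_{p(x|z)}[\pl_{u}H]$ integrated against $\frac{\delta u^{*}}{\delta p}$; but $u^{*}(t,z)$ is by definition the minimizer in (\ref{eq: optimal control of GML-POSC based on Bellman eq}), whose first-order optimality condition is precisely $\mb{E}_{p(x|z)}[\pl_{u}H]=0$, so this envelope term vanishes. It is here that it matters that $u^{*}$ depends only on $z$: splitting $s'=(x',z')$, the variation $\frac{\delta u^{*}(z')}{\delta p}$ factors out of the $x'$-integral, leaving exactly the conditional expectation that defines the optimum. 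The remaining points---symmetry of $\frac{\delta}{\delta p}\frac{\delta V(t,p)}{\delta p}(s,s')$ and the vanishing of boundary terms in the integration by parts against $\mcal{L}^{\dagger}$---are standard under sufficient regularity and decay of $V$ and $p_{t}$, and I would invoke them rather than belabor them.
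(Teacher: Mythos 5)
Your proof is correct and follows essentially the same route as the paper's Appendix B: differentiate the Bellman equation with respect to $p$, use the conjugacy of $\mcal{L}$ and $\mcal{L}^{\dagger}$ together with the symmetry of the second functional derivative (the master-equation step), and evaluate the functional derivative of $V$ along the FP flow so that the two second-derivative integrals cancel, yielding the HJB equation. Your explicit handling of the envelope term---showing that the implicit $p$-dependence of $u^{*}$ drops out because the first-order optimality condition $\mb{E}_{p(x|z)}[\pl_{u}H]=0$ holds after the $x$-integral factors out---is a detail the paper passes over silently, but it does not change the argument.
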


\begin{proof}
The proof is shown in Appendix B. 
\end{proof}

While the Bellman equation (\ref{eq: Bellman eq of GML-POSC}) is a functional differential equation, the HJB equation (\ref{eq: HJB eq}) is a partial differential equation. 
As a result, unlike the conventional POSC, ML-POSC can be solved in practice. 

We note that the mean-field control technique is also applicable to the conventional POSC, and we obtain the HJB equation of the conventional POSC \cite{bensoussan_mean_2021-1}. 
However, the HJB equation of the conventional POSC is not closed by a partial differential equation because of the last term of the Bellman equation (\ref{eq: Bellman eq of POSC}). 
As a result, the mean-field control technique is not effective to the conventional POSC except in a special case \cite{bensoussan_mean_2021-1}. 

In the conventional POSC, the state estimation (memory control) and the state control are clearly separated. 
As a result, the state estimation and the state control are optimized by the Zakai equation (\ref{eq: Zakai eq}) and the Bellman equation (\ref{eq: Bellman eq of POSC}), respectively. 
In contrast, because ML-POSC considers memory limitation as well as incomplete information, the state estimation and the state control are not clearly separated. 
As a result, ML-POSC jointly optimizes the state estimation and the state control based on the FP equation (\ref{eq: FP eq}) and the HJB equation (\ref{eq: HJB eq}). 

\subsection{Comparison with completely observable stochastic control}
In this subsection, we show the similarities and the differences between ML-POSC and the COSC of the extended state. 
While ML-POSC determines the control $u_{t}$ based solely on the memory $z_{t}$, i.e., $u_{t}=u(t,z_{t})$, the COSC of the extended state determines the control $u_{t}$ based on the extended state $s_{t}$, i.e., $u_{t}=u(t,s_{t})$. 
The optimal control function of the COSC of the extended state is given by the following theorem: 

\begin{theo}\label{theo: optimal control of COSC}
The optimal control function of the COSC of the extended state is given by
\begin{align}
	u^{*}(t,s)=\argmin_{u}H\left(t,s,u,w(t,s)\right),
	\label{eq: optimal control of COSC}
\end{align}
where $H$ is the Hamiltonian (\ref{eq: Hamiltonian}), and $w(t,s)$ is the solution of the HJB equation (\ref{eq: HJB eq}). 
\end{theo}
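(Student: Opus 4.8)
The plan is to treat the COSC of the extended state as the full-information specialization of the mean-field control approach behind Theorem \ref{theo: optimal control of GML-POSC}, and to show that allowing the control to depend on the \emph{entire} extended state $s_{t}=(x_{t},z_{t})$ collapses the conditional minimization into a pointwise one. Concretely, I would re-run the derivation of Theorem \ref{theo: optimal control of GML-POSC} with the admissible class enlarged from $\{u(t,z)\}$ to $\{u(t,s)\}$ and isolate the single step at which this enlargement matters. (Equivalently, since full state feedback is available, this is just the classical verification theorem for completely observable control, which I would use as a cross-check.)

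First I would convert the COSC into the deterministic control of the density $p_{t}(s)$: the extended state SDE (\ref{eq: extended state SDE}) gives the FP equation (\ref{eq: FP eq}), and the objective (\ref{eq: OF of GML-POSC}) becomes (\ref{eq: OF of FP eq}) with the same $\bar{f},\bar{g}$. Introducing the value function $V(t,p)$ of this deterministic problem and the adjoint $w(t,s):=\frac{\delta V(t,p_{t})}{\delta p}(s)$, the same functional-derivative computation that produced (\ref{eq: Bellman eq of GML-POSC}) yields a Bellman equation of the same form, whose differentiation in $p$ reduces the functional equation to the backward partial differential equation (\ref{eq: HJB eq}), $-\pl_{t}w=H(t,s,u^{*},w)$. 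The adjoint is a genuine function of $s$ rather than a functional of the density precisely because the control acts through the generator $\mcal{L}$ pointwise in $s$; this part of the argument is insensitive to which admissible class we use.

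The step where COSC departs from ML-POSC is the minimization of $\mb{E}_{p_{t}(s)}[H(t,s,u(t,s),w(t,s))]$ over admissible controls. When $u$ is restricted to depend only on $z$, the minimizer must compromise over all $x$ sharing a given $z$, which produces the conditional expectation $\mb{E}_{p_{t}(x|z)}[\cdot]$ of Theorem \ref{theo: optimal control of GML-POSC}. When $u$ may depend on the full state $s$, the objective decouples across $s$: the value $u(t,s)$ can be optimized independently at each point, so minimizing the $p_{t}$-average reduces to minimizing the integrand $H(t,s,u,w(t,s))$ separately at each $s$, giving the pointwise formula (\ref{eq: optimal control of COSC}). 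A consequence I would flag is that with full feedback $H(t,s,u^{*},w)=\min_{u}H(t,s,u,w)$, so (\ref{eq: HJB eq}) becomes a self-contained backward PDE for $w$ that decouples from the FP equation; in ML-POSC the dependence of $u^{*}$ on $p_{t}(x|z)$ keeps the two equations coupled, which is the structural reason COSC is easier.

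The main obstacle is making the pointwise reduction and the optimality of the resulting feedback rigorous. Two points need care: (i) a measurable-selection argument ensuring that $s\mapsto\argmin_{u}H(t,s,u,w(t,s))$ is an admissible (measurable) control and that passing from the averaged to the pointwise minimizer is valid $p_{t}$-almost everywhere, so that it suffices to constrain the control on the support of $p_{t}$; and (ii) a verification theorem confirming that a sufficiently smooth solution $w$ of (\ref{eq: HJB eq}) equals the true value function, so that the feedback (\ref{eq: optimal control of COSC}) is genuinely optimal and not merely a stationary point. The latter follows from an It\^o expansion of $w(t,s_{t})$ along an arbitrary admissible trajectory together with the HJB inequality; because full state feedback removes the forward-backward coupling, no fixed-point argument between $p_{t}$ and $w$ is needed and the verification is strictly simpler than for ML-POSC.
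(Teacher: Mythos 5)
Your proposal is correct and follows essentially the same route as the paper's own proof (Appendix C): convert the problem to deterministic control of the density $p_{t}(s)$, obtain the Bellman equation as in the proof of Theorem \ref{theo: optimal control of GML-POSC based on Bellman eq}, exchange the minimization with the expectation over $p(s)$ — your ``pointwise decoupling'' is exactly this step, valid because $u$ may depend on the full extended state $s$ — and then reduce the functional Bellman equation to the HJB equation (\ref{eq: HJB eq}) exactly as in Appendix B. Your added remarks on measurable selection, the verification argument, and the resulting decoupling of the HJB equation from the FP equation go beyond the paper's formal derivation but do not constitute a different approach.
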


\begin{proof}
The conventional proof is shown in \cite{yong_stochastic_1999}. 
We note that it can be proved in the similar way as ML-POSC, which is shown in Appendix C. 
\end{proof}

Although the HJB equation (\ref{eq: HJB eq}) is the same between ML-POSC and COSC, the optimal control function is different. 
While the optimal control function of COSC is given by the minimization of the Hamiltonian (\ref{eq: optimal control of COSC}), 
that of ML-POSC is given by the minimization of the conditional expected Hamiltonian (\ref{eq: optimal control of GML-POSC}). 
This is reasonable because the controller of ML-POSC needs to estimate the state from the memory. 

\subsection{Numerical algorithm}
In this subsection, we briefly explain a numerical algorithm to obtain the optimal control function of ML-POSC (\ref{eq: optimal control of GML-POSC}). 
Because the optimal control function of COSC (\ref{eq: optimal control of COSC}) depends only on the backward HJB equation (\ref{eq: HJB eq}), 
it can be obtained by solving the HJB equation backward from the terminal condition \cite{yong_stochastic_1999,kushner_numerical_1992,fleming_controlled_2006}.
In contrast, because the optimal control function of ML-POSC (\ref{eq: optimal control of GML-POSC}) depends on the forward FP equation (\ref{eq: FP eq}) as well as the backward HJB equation (\ref{eq: HJB eq}), it cannot be obtained in the similar way as COSC. 
Because the backward HJB equation depends on the forward FP equation through the optimal control function of ML-POSC, the HJB equation cannot be solved backward from the terminal condition. 
As a result, ML-POSC needs to solve the system of HJB-FP equations. 

The system of HJB-FP equations also appears in the mean-field game and control \cite{bensoussan_mean_2013,carmona_probabilistic_2018,carmona_probabilistic_2018-1}, and numerous numerical algorithms have been developed \cite{achdou_finite_2013,achdou_mean_2020,lauriere_numerical_2021}. 
Therefore, unlike the conventional POSC, ML-POSC can be solved in practice by using these algorithms.   
Furthermore, unlike the mean-field game and control, the coupling of HJB-FP equations is limited to the optimal control function in ML-POSC. 
By exploiting this property, more efficient algorithms may be proposed in ML-POSC \cite{tottori_notitle_2022}. 

In this paper, we use the forward-backward sweep method (fixed-point iteration method) to obtain the optimal control function of ML-POSC \cite{lauriere_numerical_2021,carlini_semi-lagrangian_2013,carlini_fully_2014,carlini_semi-lagrangian_2015,tottori_notitle_2022}, which is one of the most basic algorithms for the system of HJB-FP equations. 
The forward-backward sweep method computes the forward FP equation (\ref{eq: FP eq}) and the backward HJB equation (\ref{eq: HJB eq}) alternately. 
In the mean-field game and control, the convergence of the forward-backward sweep method is not guaranteed. 
In contrast, it is guaranteed in ML-POSC because the coupling of HJB-FP equations is limited to the optimal control function in ML-POSC \cite{tottori_notitle_2022}. 

\section{LINEAR-QUADRATIC-GAUSSIAN PROBLEM WITHOUT MEMORY LIMITATION}\label{sec: LQG problem without memory limitation}
In the Linear-Quadratic-Gaussian (LQG) problem of the conventional POSC, the Zakai equation (\ref{eq: Zakai eq}) and the Bellman equation (\ref{eq: Bellman eq of POSC}) are reduced to the Kalman filter and the Riccati equation, respectively \cite{bensoussan_stochastic_1992,bensoussan_estimation_2018}. 
Because the infinite-dimensional Zakai equation is reduced to the finite-dimensional Kalman filter, the LQG problem of the conventional POSC can also be discussed in terms of ML-POSC. 
In this section, we briefly review the LQG problem of the conventional POSC, and then reproduce the Kalman filter and the Riccati equation from the viewpoint of ML-POSC. 
We note that the LQG problem of the conventional POSC cannot consider the memory limitation such as the memory noise and cost whereas ML-POSC can. 

\subsection{Review of partially observable stochastic control}
In this subsection, we briefly review the LQG problem of the conventional POSC \cite{bensoussan_stochastic_1992,bensoussan_estimation_2018}. 
The state $x_{t}\in\mb{R}^{d_{x}}$ and the observation $y_{t}\in\mb{R}^{d_{y}}$ at time $t\in[0,T]$ evolve by the following SDEs: 
\begin{align}
	dx_{t}&=\left(A(t)x_{t}+B(t)u_{t}\right)dt+\sigma(t)d\omega_{t},\label{eq: state SDE LQG-POSC}\\
	dy_{t}&=H(t)x_{t}dt+\gamma(t)d\nu_{t},\label{eq: observation SDE LQG-POSC}
\end{align}
where $x_{0}$ obeys the Gaussian distribution $p_{0}(x_{0})=\mcal{N}\left(x_{0}\left|\mu_{x,0},\Sigma_{xx,0}\right.\right)$, $y_{0}$ is an arbitrary real vector, $\omega_{t}\in\mb{R}^{d_{\omega}}$ and $\nu_{t}\in\mb{R}^{d_{\nu}}$ are independent standard Wiener processes, and $u_{t}=u(t,y_{0:t})\in\mb{R}^{d_{u}}$ is the control. 
The objective function is given by the following expected cumulative cost function: 
\begin{align}	
	J[u]:=\mb{E}_{p(x_{0:T},y_{0:T};u)}\left[\int_{0}^{T}\left(x_{t}^{T}Q(t)x_{t}+u_{t}^{T}R(t)u_{t}\right)dt+x_{T}^{T}Px_{T}\right],
	\label{eq: OF of POSC LQG-POSC}
\end{align}
where $Q(t)\succeq O$, $R(t)\succ O$, and $P\succeq O$. 
The LQG problem of the conventional POSC is to find the optimal control function $u^{*}$ that minimizes the objective function $J[u]$ as follows: 
\begin{align}	
	u^{*}:=\argmin_{u}J[u].
	\label{eq: POSC LQG-POSC}
\end{align}

In the LQG problem of the conventional POSC, the posterior probability is given by the Gaussian distribution $p(x_{t}|y_{0:t})=\mcal{N}(x_{t}|\check{\mu}(t),\check{\Sigma}(t))$, and $u_{t}=u(t,y_{0:t})$ is reduced to $u_{t}=u(t,\check{\mu}_{t})$ without loss of performance.  

\begin{theo}\label{theo: optimal control of POSC in LQG-POSC}
In LQG problem of the conventional POSC, the optimal control function is given by
\begin{align}
	u^{*}(t,\check{\mu})=-R^{-1}B^{T}\Psi\check{\mu},
\end{align}
where $\check{\mu}(t)$ and $\check{\Sigma}(t)$ are the solutions of the following Kalman filter: 
\begin{align}
	&d\check{\mu}=\left(A-BR^{-1}B^{T}\Psi\right)\check{\mu}dt+\check{\Sigma}H^{T}(\gamma\gamma^{T})^{-1}\left(dy_{t}-H\check{\mu}dt\right),\label{eq: Kalman filter mu}\\
	&\frac{d\check{\Sigma}}{dt}=\sigma\sigma^{T}+A\check{\Sigma}+\check{\Sigma}A^{T}-\check{\Sigma}H^{T}(\gamma\gamma^{T})^{-1}H\check{\Sigma},\label{eq: Kalman filter Sigma}
\end{align}
where $\check{\mu}(0)=\mu_{x,0}$ and $\check{\Sigma}(0)=\Sigma_{xx,0}$. 
$\Psi(t)$ is the solution of the following Riccati equation: 
\begin{align}
	-\frac{d\Psi}{dt}&=Q+A^{T}\Psi+\Psi A -\Psi BR^{-1}B^{T}\Psi,\label{eq: ODE of Psi LQG-POSC}
\end{align}
where $\Psi(T)=P$. 
\end{theo}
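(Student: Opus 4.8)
The plan is to combine the classical Gaussian filtering reduction with the separation principle and a quadratic ansatz for the value function. First I would exploit the fact that the dynamics (\ref{eq: state SDE LQG-POSC}) and the observation (\ref{eq: observation SDE LQG-POSC}) are linear and the driving noises are Gaussian, so that the conditional law $p(x_t|y_{0:t})$ remains Gaussian, $\mcal{N}(x_t|\check{\mu}(t),\check{\Sigma}(t))$, for all $t$. Substituting this Gaussian form into the filtering dynamics implied by the Zakai equation (\ref{eq: Zakai eq}) and matching the first two conditional moments yields the Kalman--Bucy filter: the mean obeys $d\check{\mu}=(A\check{\mu}+Bu)dt+\check{\Sigma}H^{T}(\gamma\gamma^{T})^{-1}(dy_{t}-H\check{\mu}dt)$, driven by the innovation process, while the conditional covariance obeys the deterministic matrix Riccati equation (\ref{eq: Kalman filter Sigma}). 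The crucial structural point is that $\check{\Sigma}(t)$ solves an ordinary differential equation that does not depend on the realized observations and can therefore be computed offline.

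Second, I would invoke the separation principle to collapse the sufficient statistic. Since the posterior is completely specified by the pair $(\check{\mu},\check{\Sigma})$ and $\check{\Sigma}$ is deterministic, all observation-dependent information relevant to control is carried by $\check{\mu}$ alone. Hence restricting the admissible policies from the general form $u(t,y_{0:t})$ to the feedback form $u(t,\check{\mu}_{t})$ entails no loss of optimality, and the original partially observable problem becomes a completely observable control problem whose state $\check{\mu}$ evolves by the linear stochastic differential equation above.

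Third, I would rewrite the cost (\ref{eq: OF of POSC LQG-POSC}) in terms of $\check{\mu}$. Using the tower property together with the Gaussian identity $\mb{E}[x_{t}^{T}Qx_{t}]=\mb{E}[\check{\mu}^{T}Q\check{\mu}]+\mathrm{tr}(Q\check{\Sigma})$ (and the analogous identity for the terminal cost), the objective splits into a controllable quadratic functional of $\check{\mu}$ plus the control-independent trace terms $\int_{0}^{T}\mathrm{tr}(Q\check{\Sigma})\,dt+\mathrm{tr}(P\check{\Sigma}(T))$. Minimizing $J[u]$ therefore reduces to a standard completely observable linear-quadratic problem in $\check{\mu}$. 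Solving the associated Hamilton--Jacobi--Bellman equation with the quadratic ansatz $V(t,\check{\mu})=\check{\mu}^{T}\Psi(t)\check{\mu}+r(t)$ and completing the square identifies the optimal feedback $u^{*}(t,\check{\mu})=-R^{-1}B^{T}\Psi\check{\mu}$, with $\Psi$ satisfying the Riccati equation (\ref{eq: ODE of Psi LQG-POSC}) under the terminal condition $\Psi(T)=P$. Substituting this feedback into the mean dynamics finally recovers (\ref{eq: Kalman filter mu}).

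The main obstacle I anticipate lies in the reduction steps rather than in the algebra. Verifying that the Gaussian form is genuinely propagated by the filtering flow and that the innovation process is a bona fide Wiener process requires care, and the separation principle itself---establishing that the optimum over all $y_{0:t}$-adapted policies indeed collapses to feedback in $\check{\mu}$---is the conceptual heart of the argument. Once the problem has been recast as completely observable linear-quadratic control in $\check{\mu}$, the remaining Riccati derivation via completion of squares is routine.
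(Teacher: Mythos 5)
Your proposal is correct and is essentially the same argument as the paper's proof, which the paper delegates entirely to the cited references (Bensoussan's texts): the classical route of Kalman--Bucy filtering, the separation principle exploiting the determinism of $\check{\Sigma}(t)$, the cost decomposition $\mb{E}[x_{t}^{T}Qx_{t}]=\mb{E}[\check{\mu}^{T}Q\check{\mu}]+\mathrm{tr}(Q\check{\Sigma})$, and a quadratic value-function ansatz yielding the Riccati equation. No discrepancy in approach or substance.
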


\begin{proof}
The proof is shown in \cite{bensoussan_stochastic_1992,bensoussan_estimation_2018}.
\end{proof}

In the LQG problem of the conventional POSC, the Zakai equation (\ref{eq: Zakai eq}) and the Bellman equation (\ref{eq: Bellman eq of POSC}) are reduced to the Kalman filter (\ref{eq: Kalman filter mu}), (\ref{eq: Kalman filter Sigma}) and the Riccati equation (\ref{eq: ODE of Psi LQG-POSC}), respectively. 

\subsection{Reproduction by memory-limited partially observable stochastic control}
Because the infinite-dimensional Zakai equation (\ref{eq: Zakai eq}) is reduced to the finite-dimensional Kalman filter (\ref{eq: Kalman filter mu}), (\ref{eq: Kalman filter Sigma}), the LQG problem of the conventional POSC can also be discussed in terms of ML-POSC. 
In this subsection, we reproduce the Kalman filter (\ref{eq: Kalman filter mu}), (\ref{eq: Kalman filter Sigma}) and the Riccati equation (\ref{eq: ODE of Psi LQG-POSC}) from the viewpoint of ML-POSC. 

ML-POSC defines the finite-dimensional memory $z_{t}\in\mb{R}^{d_{z}}$. 
In the LQG problem of the conventional POSC, the memory dimension $d_{z}$ is the same with the state dimension $d_{x}$. 
The controller of ML-POSC determines the control $u_{t}$ based on the memory $z_{t}$, i.e., $u_{t}=u(t,z_{t})$. 
The memory $z_{t}$ is assumed to evolve by the following SDE: 
\begin{align}
	dz_{t}=v_{t}dt+\kappa_{t}dy_{t},\label{eq: memory SDE LQG-POSC}
\end{align}
where $z_{0}=\mu_{0,xx}$, and $v_{t}=v(t,z_{t})\in\mb{R}^{d_{z}}$ and $\kappa_{t}=\kappa(t,z_{t})\in\mb{R}^{d_{z}\times d_{y}}$ are the memory controls. 
We note that the LQG problem of the conventional POSC does not consider the memory noise. 
The objective function of ML-POSC is given by the following expected cumulative cost function: 
\begin{align}	
	J[u,v,\kappa]:=\mb{E}_{p(x_{0:T},y_{0:T},z_{0:T};u,v,\kappa)}\left[\int_{0}^{T}\left(x_{t}^{T}Q(t)x_{t}+u_{t}^{T}R(t)u_{t}\right)dt+x_{T}^{T}Px_{T}\right]. 
	\label{eq: OF of ML-POSC LQG-POSC}
\end{align}
We note that the LQG problem of the conventional POSC does not consider the memory control cost. 
ML-POSC optimizes $u$, $v$, and $\kappa$ based on $J[u,v,\kappa]$ as follows: 
\begin{align}	
	u^{*},v^{*},\kappa^{*}:=\argmin_{u,v,\kappa}J[u,v,\kappa].
	\label{eq: ML-POSC LQG-POSC}
\end{align}

In the LQG problem of the conventional POSC, the probability of the extended state $s_{t}$ (\ref{eq: extended state}) is given by the Gaussian distribution $p_{t}(s_{t})=\mcal{N}(s_{t}|\mu(t),\Sigma(t))$. 
The posterior probability of the state $x_{t}$ given the memory $z_{t}$ is also given by the Gaussian distribution $p_{t}(x_{t}|z_{t})=\mcal{N}(x_{t}|\mu_{x|z}(t,z_{t}),\Sigma_{x|z}(t))$, where $\mu_{x|z}(t,z_{t})$ and $\Sigma_{x|z}(t)$ are given as follows: 
\begin{align}
	\mu_{x|z}(t,z_{t})&=\mu_{x}(t)+\Sigma_{xz}(t)\Sigma_{zz}^{-1}(t)(z_{t}-\mu_{z}(t)),\label{eq: conditional expectation LQG-POSC}\\
	\Sigma_{x|z}(t)&=\Sigma_{xx}(t)-\Sigma_{xz}(t)\Sigma_{zz}^{-1}(t)\Sigma_{zx}(t).\label{eq: conditional covariance LQG-POSC}
\end{align}

\begin{theo}\label{theo: optimal control of ML-POSC in LQG-POSC}
In LQG problem of the conventional POSC, the optimal control functions are given by
\begin{align}
	u^{*}(t,z)&=-R^{-1}B^{T}\Psi z,\\
	v^{*}(t,z)&=\left(A-BR^{-1}B^{T}\Psi-\Sigma_{x|z}H^{T}(\gamma\gamma^{T})^{-1}H\right)z,\\
	\kappa^{*}(t,z)&=\Sigma_{x|z}H^{T}(\gamma\gamma^{T})^{-1},\label{optimal control of ML-POSC in LQG-POSC kappa}
\end{align}
where $\mu_{x|z}(t,z_{t})=z_{t}$ and $\Sigma_{x|z}(t)$ are the solutions of the following equations: 
\begin{align}
	&dz_{t}=\left(A-BR^{-1}B^{T}\Psi\right)z_{t}dt+\Sigma_{x|z}H^{T}(\gamma\gamma^{T})^{-1}\left(dy_{t}-Hz_{t}dt\right),\label{eq: Kalman filter mu in ML-POSC}\\
	&\frac{d\Sigma_{x|z}}{dt}=\sigma\sigma^{T}+A\Sigma_{x|z}+\Sigma_{x|z}A^{T}-\Sigma_{x|z}H^{T}(\gamma\gamma^{T})^{-1}H\Sigma_{x|z},\label{eq: Kalman filter Sigma in ML-POSC}
\end{align}
where $z_{0}=\mu_{x,0}$ and $\Sigma_{x|z}(0)=\Sigma_{xx,0}$. 
$\Psi(t)$ is the solution of the Riccati equation (\ref{eq: ODE of Psi LQG-POSC}).  
\end{theo}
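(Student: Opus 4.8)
The plan is to apply Theorem \ref{theo: optimal control of GML-POSC} to the extended state $s_{t}=(x_{t},z_{t})$, whose dynamics are obtained by stacking (\ref{eq: state SDE LQG-POSC}), (\ref{eq: observation SDE LQG-POSC}) and the memory SDE (\ref{eq: memory SDE LQG-POSC}) with control $\tilde{u}=(u,v,\kappa)$, and whose running and terminal costs are quadratic in $x$ and $u$ only. Because the extended drift and diffusion are linear/constant in $s$ and the costs are quadratic, I would posit a Gaussian ansatz $p_{t}(s)=\mcal{N}(s|\mu(t),\Sigma(t))$ for the solution of the FP equation (\ref{eq: FP eq}) and a quadratic ansatz $w(t,s)=s^{T}\Pi(t)s+r(t)$ for the solution of the HJB equation (\ref{eq: HJB eq}), where $\Pi(t)$ is symmetric and partitioned into blocks $\Pi_{xx},\Pi_{xz}=\Pi_{zx}^{T},\Pi_{zz}$ according to $s=(x,z)$. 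The terminal condition $w(T,s)=x^{T}Px$ fixes $\Pi(T)$ and $r(T)=0$, and no linear-in-$s$ term is generated because the drift carries no affine offset.

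First I would carry out the minimization defining $u^{*}$ in (\ref{eq: optimal control of GML-POSC}). Writing $\mb{E}_{p_{t}(x|z)}[H]$ with $p_{t}(x|z)=\mcal{N}(x|\mu_{x|z},\Sigma_{x|z})$ from (\ref{eq: conditional expectation LQG-POSC})--(\ref{eq: conditional covariance LQG-POSC}), the controls $u$ and $\kappa$ enter quadratically (through $u^{T}Ru$ and through the memory diffusion $\kappa\gamma\gamma^{T}\kappa^{T}$ in $\mcal{L}w$), so their minimizers follow by setting the gradients to zero and are affine in $z$. The memory drift $v$, however, enters $\mb{E}_{p_{t}(x|z)}[H]$ only linearly, through $v^{T}\mb{E}_{p_{t}(x|z)}[\nabla_{z}w]$; hence finiteness of the infimum forces the coefficient $\Pi_{zx}\mu_{x|z}+\Pi_{zz}z$ to vanish, which I would use as the optimality condition for $v$ in place of an explicit minimizer. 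Matching the $zz^{T}$- and $\Sigma_{x|z}$-parts of the stationarity condition for $\kappa$ is then expected to force the structural identities $\Pi_{xz}=\Pi_{zx}=-\Pi_{zz}$, under which $w$ decomposes as $w(t,s)=x^{T}\Psi x+(x-z)^{T}\Pi_{zz}(x-z)$ with $\Psi:=\Pi_{xx}-\Pi_{zz}$, the coefficient of $v$ collapses to $\Pi_{zz}(z-\mu_{x|z})$, and one obtains $\kappa^{*}=\Sigma_{x|z}H^{T}(\gamma\gamma^{T})^{-1}$ and $u^{*}=-R^{-1}B^{T}\Psi z$ once the consistency $\mu_{x|z}=z$ is established.

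Next I would substitute the optimal controls into the HJB equation (\ref{eq: HJB eq}) and match the quadratic and constant coefficients in $s$, yielding coupled matrix ODEs for $\Pi_{xx},\Pi_{zz}$ and $r$; I expect the combination $\Psi=\Pi_{xx}-\Pi_{zz}$ to satisfy exactly the control Riccati equation (\ref{eq: ODE of Psi LQG-POSC}). In parallel, since the closed-loop extended dynamics are linear and Gaussian, the FP equation (\ref{eq: FP eq}) reduces to the standard moment ODEs for $\mu(t)$ and $\Sigma(t)$. I would then prove the self-consistency $\mu_{x|z}(t,z)=z$: using $\mu_{x}(0)=\mu_{z}(0)=\mu_{x,0}$ and $\Sigma_{xz}(0)=\Sigma_{zz}(0)=\Sigma_{xx,0}$, I would show the moment ODEs preserve $\mu_{x}=\mu_{z}$ and $\Sigma_{xz}=\Sigma_{zz}$, so that (\ref{eq: conditional expectation LQG-POSC}) collapses to $\mu_{x|z}=z$ and (\ref{eq: conditional covariance LQG-POSC}) gives $\Sigma_{x|z}=\Sigma_{xx}-\Sigma_{zz}$. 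Substituting $\kappa^{*}$ then shows that $z_{t}=\mu_{x|z}$ obeys the Kalman filter (\ref{eq: Kalman filter mu in ML-POSC}) and that $\Sigma_{x|z}$ obeys the filtering Riccati equation (\ref{eq: Kalman filter Sigma in ML-POSC}), while $v^{*}$ is read off by demanding that the memory drift make $z_{t}$ evolve as this Kalman filter, i.e. $v^{*}=(A-BR^{-1}B^{T}\Psi-\Sigma_{x|z}H^{T}(\gamma\gamma^{T})^{-1}H)z$.

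The main obstacle I anticipate is twofold, both difficulties arising because ML-POSC couples the backward HJB and forward FP equations. First, the memory drift $v$ is a degenerate, cost-free, linearly entering control, so its value is not fixed by pointwise minimization; I must argue carefully that the vanishing of its linear coefficient, together with the forward moment equations, pins it down and is consistent with a finite value function. Second, I must establish the decoupling and self-consistency: that the single joint covariance ODE for $\Sigma(t)$, under the optimal $\kappa^{*}$, both preserves $\Sigma_{xz}=\Sigma_{zz}$ and makes $\Sigma_{x|z}$ satisfy the filtering Riccati equation independently of $\Psi$, while the backward equation delivers the control Riccati for $\Psi$ independently of $\Sigma_{x|z}$. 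Verifying that these forward and backward computations are mutually consistent, and that the consistency $\mu_{x|z}=z$ propagates for all $t$, is the technical heart of the proof.
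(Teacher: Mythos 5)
Your proposal follows essentially the same route as the paper's proof (Appendix D): apply Theorem \ref{theo: optimal control of GML-POSC}, posit a Gaussian ansatz for the FP equation and a quadratic ansatz for the HJB equation, treat the cost-free memory drift $v$ as a degenerate control whose linear coefficient $\mb{E}_{p_{t}(x|z)}[\pl w/\pl z]$ must vanish and which is then pinned down by demanding that $z_{t}$ evolve as the Kalman filter, and close the argument by showing that the moment ODEs preserve $\mu_{x}=\mu_{z}$ and $\Sigma_{xz}=\Sigma_{zz}$ while coefficient matching in the HJB equation yields the Riccati equation for $\Psi$. The only structural difference is cosmetic: the paper directly posits $w(t,s)=x^{T}\Psi(t)x+(x-z)^{T}\Phi(t)(x-z)+\beta(t)$, whereas you start from a general quadratic $s^{T}\Pi s+r$ and derive the identities $\Pi_{xz}=\Pi_{zx}=-\Pi_{zz}$; with $\Phi=\Pi_{zz}$ and $\Psi=\Pi_{xx}-\Pi_{zz}$ these are the same ansatz in different coordinates.

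Two corrections are needed. First, your initial conditions $\Sigma_{xz}(0)=\Sigma_{zz}(0)=\Sigma_{xx,0}$ are wrong: in this problem $z_{0}=\mu_{x,0}$ is deterministic, so $\Sigma_{xz}(0)=\Sigma_{zz}(0)=O$ (your choice would amount to $z_{0}=x_{0}$ almost surely, which the controller cannot implement). With your values the filter would start at $\Sigma_{x|z}(0)=\Sigma_{xx,0}-\Sigma_{xx,0}=0$, contradicting the theorem's claim $\Sigma_{x|z}(0)=\Sigma_{xx,0}$; with the correct values, your invariance argument propagates $\mu_{x}=\mu_{z}$ and $\Sigma_{xz}=\Sigma_{zz}$ verbatim and gives the stated initialization. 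Second, your justification for omitting a linear-in-$s$ term ("the drift carries no affine offset") is not the actual reason: in the memory-limited LQG problem of Appendix E a linear term $\alpha^{T}(t)s$ is required even though the drift there has no affine offset either. Here the linear term vanishes only because the consistency $\mu_{x|z}=z$ removes the $\mu$-dependence from $u^{*}$, a fact that your final coefficient-matching step must confirm rather than assume at the outset.
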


\begin{proof}
The proof is shown in Appendix D.  
\end{proof}

In the LQG problem of the conventional POSC, the optimal memory dynamics of ML-POSC (\ref{eq: Kalman filter mu in ML-POSC}), (\ref{eq: Kalman filter Sigma in ML-POSC}) corresponds to the Kalman filter (\ref{eq: Kalman filter mu}), (\ref{eq: Kalman filter Sigma}). 
Furthermore, ML-POSC reproduces the Riccati equation (\ref{eq: ODE of Psi LQG-POSC}). 

\section{LINEAR-QUADRATIC-GAUSSIAN PROBLEM WITH MEMORY LIMITATION}\label{sec: LQG problem with memory limitation}
The LQG problem of the conventional POSC does not consider memory limitation. 
Especially, it does not consider the memory noise and cost. 
Furthermore, because the memory dimension is restricted to the state dimension in the finite-dimensional Kalman filter, the memory dimension cannot be determined according to a given controller. 
ML-POSC can generalize the LQG problem to include the memory limitation. 
In this section, we discuss the LQG problem with memory limitation based on ML-POSC. 

\subsection{Problem formulation}\label{sec: LQG Problem formulation}
In this subsection, we formulate the LQG problem with memory limitation. 
The state and observation SDEs are the same as in the previous section, which are given by (\ref{eq: state SDE LQG-POSC}) and (\ref{eq: observation SDE LQG-POSC}), respectively. 
The controller of ML-POSC determines the control $u_{t}\in\mb{R}^{d_{u}}$ based on the memory $z_{t}\in\mb{R}^{d_{z}}$, i.e., $u_{t}=u(t,z_{t})$. 
Unlike the LQG problem of the conventional POSC, the memory dimension $d_{z}$ is not necessarily the same with the state dimension $d_{x}$. 

The memory $z_{t}$ is assumed to evolve by the following SDE: 
\begin{align}
	dz_{t}&=v_{t}dt+\kappa(t)dy_{t}+\eta(t)d\xi_{t},\label{eq: memory SDE LQG-ML-POSC}
\end{align}
where $z_{0}$ obeys the Gaussian distribution $p_{0}(z_{0})=\mcal{N}\left(z_{0}\left|\mu_{z,0},\Sigma_{zz,0}\right.\right)$, $\xi_{t}\in\mb{R}^{d_{\xi}}$ is the standard Wiener process, and $v_{t}=v(t,z_{t})\in\mb{R}^{d_{v}}$ is the control. 
Because the initial condition $z_{0}$ is stochastic and the memory SDE (\ref{eq: memory SDE LQG-ML-POSC}) includes the intrinsic stochasticity $d\xi_{t}$, the LQG problem of ML-POSC can consider the memory noise explicitly. 
We note that $\kappa(t)$ is independent of the memory $z_{t}$. 
If $\kappa(t)$ depends on the memory $z_{t}$, the memory SDE (\ref{eq: memory SDE LQG-ML-POSC}) becomes non-linear and non-Gaussian. 
As a result, the optimal control functions cannot be derived explicitly in this case. 
In order to keep the memory SDE (\ref{eq: memory SDE LQG-ML-POSC}) linear and Gaussian obtaining the optimal control functions explicitly, we restrict $\kappa(t)$ being independent of the memory $z_{t}$ in the LQG problem with memory limitation. 
The LQG problem without memory limitation is the special case where the optimal control $\kappa_{t}^{*}=\kappa^{*}(t,z_{t})$ in (\ref{optimal control of ML-POSC in LQG-POSC kappa}) does not depend on the memory $z_{t}$.  

The objective function is given by the following expected cumulative cost function: 
\begin{align}	
	J[u,v]:=\mb{E}_{p(x_{0:T},y_{0:T},z_{0:T};u,v)}\left[\int_{0}^{T}\left(x_{t}^{T}Q(t)x_{t}+u_{t}^{T}R(t)u_{t}+v_{t}^{T}M(t)v_{t}\right)dt+x_{T}^{T}Px_{T}\right],
	\label{eq: OF of ML-POSC LQG-ML-POSC}
\end{align}
where $Q(t)\succeq O$, $R(t)\succ O$, $M(t)\succ O$, and $P\succeq O$. 
Because the cost function includes $v_{t}^{T}M(t)v_{t}$, the LQG problem of ML-POSC can consider the memory control cost explicitly. 
ML-POSC optimizes the state control function $u$ and the memory control function $v$ based on the objective function $J[u,v]$ as follows: 
\begin{align}	
	u^{*},v^{*}:=\argmin_{u,v}J[u,v].
	\label{eq: ML-POSC LQG-ML-POSC}
\end{align}
For the sake of simplicity, we do not optimize $\kappa(t)$ although we can in the similar way.

\subsection{Problem reformulation}\label{sec: LQG Problem reformulation}
Although the formulation in the previous subsection clarifies its relationship with that in the previous section,  it is inconvenient for further mathematical investigations. 
In order to resolve this problem, we reformulate the LQG problem with memory limitation based on the extended state $s_{t}$ (\ref{eq: extended state}). 
The formulation in this subsection is simpler and more general than that in the previous subsection. 

In the LQG problem with memory limitation, the extended state SDE (\ref{eq: extended state SDE}) is given as follows:  
\begin{align}
	ds_{t}=\left(\tilde{A}(t)s_{t}+\tilde{B}(t)\tilde{u}_{t}\right)dt+\tilde{\sigma}(t)d\tilde{\omega}_{t},\label{SDE of LQG}
\end{align}
where $s_{0}$ obeys the Gaussian distribution $p_{0}(s_{0}):=\mcal{N}\left(s_{0}\left|\mu_{0},\Sigma_{0}\right.\right)$, $\tilde{\omega}_{t}\in\mb{R}^{d_{\tilde{\omega}}}$ is the standard Wiener process, and $\tilde{u}_{t}=\tilde{u}(t,z_{t})\in\mb{R}^{d_{\tilde{u}}}$ is the control.
The extended state SDE (\ref{SDE of LQG}) includes the previous state, observation, and memory SDEs (\ref{eq: state SDE LQG-POSC}), (\ref{eq: observation SDE LQG-POSC}), (\ref{eq: memory SDE LQG-ML-POSC}) as a special case because they can be represented as follows: 
\begin{align}
	ds_{t}
	&=\left(\left(\begin{array}{cc}
		A&O\\
		\kappa H&O\\
	\end{array}\right)s_{t}
	+\left(\begin{array}{cc}
		B&O\\
		O&I\\
	\end{array}\right)
	\left(\begin{array}{c}
		u_{t}\\ v_{t}\\
	\end{array}\right)\right)dt
	+\left(\begin{array}{ccc}
		\sigma &O&O\\
		O&\kappa\gamma&\eta\\
	\end{array}\right)
	\left(\begin{array}{c}
		d\omega_{t}\\
		d\nu_{t}\\
		d\xi_{t}\\
	\end{array}\right),
\end{align}
where $p_{0}(s_{0})=p_{0}(x_{0})p_{0}(z_{0})$.

The objective function (\ref{eq: OF of GML-POSC}) is given the following expected cumulative cost function: 
\begin{align}
	J[\tilde{u}]:=\mb{E}_{p(s_{0:T};\tilde{u})}\left[\int_{0}^{T}\left(s_{t}^{T}\tilde{Q}(t)s_{t}+\tilde{u}_{t}^{T}\tilde{R}(t)\tilde{u}_{t}\right)dt+s_{T}^{T}\tilde{P}s_{T}\right],\label{OF of LQG}
\end{align}
where $\tilde{Q}(t)\succeq O$, $\tilde{R}(t)\succ O$, and $\tilde{P}\succeq O$. 
This objective function (\ref{OF of LQG}) includes the previous objective function (\ref{eq: OF of ML-POSC LQG-ML-POSC}) as a special case because it can be represented as follows: 
\begin{align}
	J[u,v]:=\mb{E}_{p(s_{0:T};u,v)}\left[\int_{0}^{T}\left(s_{t}^{T}\left(\begin{array}{cc}
		Q&O\\
		O&O\\
	\end{array}\right)s_{t}
	+\left(\begin{array}{c}
		u_{t}\\ v_{t}\\
	\end{array}\right)^{T}
	\left(\begin{array}{cc}
		R&O\\
		O&M\\
	\end{array}\right)
	\left(\begin{array}{c}
		u_{t}\\ v_{t}\\
	\end{array}\right)\right)dt+s_{T}^{T}
	\left(\begin{array}{cc}
		P&O\\
		O&O\\
	\end{array}\right)s_{T}\right].
\end{align}

The objective of the LQG problem with memory limitation is to find the optimal control function $\tilde{u}^{*}$ that minimizes the objective function $J[\tilde{u}]$ as follows: 
\begin{align}	
	\tilde{u}^{*}:=\argmin_{\tilde{u}}J[\tilde{u}].
	\label{eq: LQG}
\end{align}

In the following subsection, we mainly consider the formulation of this subsection rather than that of the previous subsection because it is simpler and more general. 
Moreover, we omit $\tilde{\cdot}$ for the notational simplicity. 

\subsection{Derivation of optimal control function}\label{sec: LQG Derivation of optimal control function}
In this subsection, we derive the optimal control function of the LQG problem with memory limitation by applying Theorem \ref{theo: optimal control of GML-POSC}.  
In the LQG problem with memory limitation, the probability of the extended state $s$ at time $t$ is given by the Gaussian distribution $p_{t}(s):=\mcal{N}\left(s|\mu(t),\Sigma(t)\right)$. 
By defining the stochastic extended state $\hat{s}:=s-\mu$, 
$\mb{E}_{p_{t}(x|z)}\left[s\right]$ is given as follows: 
\begin{align}
	\mb{E}_{p_{t}(x|z)}\left[s\right]=K(t)\hat{s}+\mu(t), 
	\label{eq: conditional mean vector of LQG}
\end{align}
where $K(t)$ is defined by 
\begin{align}
	K(t):=\left(\begin{array}{cc}
		O&\Sigma_{xz}(t)\Sigma_{zz}^{-1}(t)\\ 
		O&I\\
	\end{array}\right).
	\label{eq: inference gain}
\end{align}
By applying Theorem \ref{theo: optimal control of GML-POSC} to the LQG problem with memory limitation, we obtain the following theorem: 

\begin{theo}\label{theo: optimal control of LQG in ML-POSC}
In the LQG problem with memory limitation, the optimal control function is given by
\begin{align}
	u^{*}(t,z)=-R^{-1}B^{T}\left(\Pi K\hat{s}+\Psi\mu\right),
	\label{eq: optimal control of LQG}
\end{align}
where $K(t)$ (\ref{eq: inference gain}) depends on $\Sigma(t)$, and $\mu(t)$ and $\Sigma(t)$ are the solutions of the following ordinary differential equations: 
\begin{align}
	\frac{d\mu}{dt}&=\left(A-BR^{-1}B^{T}\Psi\right)\mu\label{eq: ODE of mu},\\
	\frac{d\Sigma}{dt}&=\sigma\sigma^{T}+\left(A-BR^{-1}B^{T}\Pi K\right)\Sigma+\Sigma\left(A-BR^{-1}B^{T}\Pi K\right)^{T},\label{eq: ODE of Sigma}
\end{align}
where $\mu(0)=\mu_{0}$ and $\Sigma(0)=\Sigma_{0}$. 
$\Psi(t)$ and $\Pi(t)$ are the solutions of the following ordinary differential equations: 
\begin{align}
	-\frac{d\Psi}{dt}&=Q+A^{T}\Psi+\Psi A -\Psi BR^{-1}B^{T}\Psi,\label{eq: ODE of Psi}\\
	-\frac{d\Pi}{dt}&=Q+A^{T}\Pi+\Pi A-\Pi BR^{-1}B^{T}\Pi+(I-K)^{T}\Pi BR^{-1}B^{T}\Pi (I-K),\label{eq: ODE of Pi}
\end{align}
where $\Psi(T)=\Pi(T)=P$. 
\end{theo}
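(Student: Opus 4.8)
The plan is to apply Theorem \ref{theo: optimal control of GML-POSC} directly to the LQG structure and exploit the fact that Gaussianity is preserved throughout. The starting point is the general optimal control formula $u^{*}(t,z)=\argmin_{u}\mb{E}_{p_{t}(x|z)}\left[H(t,s,u,w(t,s))\right]$, together with the HJB equation $-\pl_{t}w=H(t,s,u^{*},w)$ and the FP equation for $p_{t}(s)$. For the LQG problem the extended state SDE (\ref{SDE of LQG}) is linear and the cost (\ref{OF of LQG}) is quadratic, so I expect $p_{t}(s)$ to remain Gaussian $\mcal{N}(s|\mu(t),\Sigma(t))$ and the value function $w(t,s)$ to be quadratic in $s$. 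The guiding ansatz is therefore $w(t,s)=s^{T}\Pi(t)s + (\text{linear and constant terms})$; the two Riccati-type matrices $\Psi$ and $\Pi$ will emerge from separating the contribution of the mean $\mu$ from the contribution of the fluctuation $\hat{s}=s-\mu$.

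\emph{First I would} compute the Hamiltonian explicitly. With $H=s^{T}Qs+u^{T}Ru+(\text{drift}\cdot\nabla w)+\tfrac12\mathrm{tr}(\tilde\sigma\tilde\sigma^{T}\nabla^{2}w)$ and $\nabla_{s}w=2\Pi s$ (plus lower-order terms), the inner minimization over $u$ is an unconstrained quadratic, giving $u^{*}$ as a linear feedback of the conditional mean $\mb{E}_{p_{t}(x|z)}[s]$. Here the key identity is (\ref{eq: conditional mean vector of LQG}): the conditional expectation decomposes as $K\hat{s}+\mu$, where $K$ is the inference gain (\ref{eq: inference gain}). Substituting this into the minimizer produces precisely the structure $u^{*}=-R^{-1}B^{T}(\Pi K\hat{s}+\Psi\mu)$ once I identify which matrix multiplies the fluctuation part and which multiplies the mean part. \emph{Next I would} plug $u^{*}$ back into the HJB equation and match it against the quadratic ansatz: collecting the $\hat{s}^{T}(\cdot)\hat{s}$ terms yields the $\Pi$ equation (\ref{eq: ODE of Pi}), while collecting the terms quadratic in $\mu$ yields the standard Riccati equation (\ref{eq: ODE of Psi}) for $\Psi$. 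The terminal conditions $\Psi(T)=\Pi(T)=P$ follow from $w(T,s)=g(s)=s^{T}Ps$.

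\emph{Finally I would} derive the forward equations (\ref{eq: ODE of mu}) and (\ref{eq: ODE of Sigma}) for the moments of $p_{t}(s)$. Substituting the linear feedback $u^{*}$ into the extended SDE (\ref{SDE of LQG}) gives a closed linear SDE, from which the standard moment ODEs follow: the mean obeys $\dot\mu=(A-BR^{-1}B^{T}\Psi)\mu$ because on the mean the feedback acts through $\Psi$, while the covariance obeys a Lyapunov-type equation in which the feedback acts through $\Pi K$ because fluctuations are controlled via the conditional-mean term. I would verify that $\Sigma$ enters $K$ only through the block $\Sigma_{xz}\Sigma_{zz}^{-1}$, so the $\Sigma$-equation and the $\Pi$-equation form a coupled HJB--FP system consistent with the general theory.

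\emph{The hard part will be} the bookkeeping in the separation of $\mu$ from $\hat{s}$. The two feedback gains $\Psi$ and $\Pi$ differ precisely because of the partial observability: the mean propagates under full-information feedback ($\Psi$), but the fluctuations can only be suppressed through the conditionally estimated state $K\hat{s}$, which injects the extra term $(I-K)^{T}\Pi BR^{-1}B^{T}\Pi(I-K)$ into the $\Pi$ equation. Getting this cross term right — showing that the gap between the full-state control $-R^{-1}B^{T}\Pi s$ and the memory-based control $-R^{-1}B^{T}\Pi K s$ contributes exactly this quadratic penalty in the HJB matching — is the delicate step, and I expect it to require carefully expanding $\mb{E}_{p_{t}(x|z)}[s^{T}\Pi s]$ using the conditional covariance and confirming that the $\Sigma_{x|z}$-dependent pieces reorganize into the stated $\Pi$ equation rather than contaminating the $\Psi$ equation.
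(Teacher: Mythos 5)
Your overall strategy coincides with the paper's (Appendix E): apply Theorem \ref{theo: optimal control of GML-POSC}, assume $p_{t}(s)=\mcal{N}(s|\mu(t),\Sigma(t))$ and a quadratic value function, minimize the conditional expected Hamiltonian using $\mb{E}_{p_{t}(x|z)}[s]=K\hat{s}+\mu$, and close the system with moment ODEs. However, there is a genuine gap at exactly the step you flag as delicate: where the standard Riccati equation for $\Psi$ comes from. You claim that ``collecting the terms quadratic in $\mu$'' in the HJB matching yields (\ref{eq: ODE of Psi}). With the pointwise HJB equation (\ref{eq: HJB eq}) and the ansatz $w(t,s)=s^{T}\Pi(t) s+\alpha^{T}(t)s+\beta(t)$ (you wrote the linear and constant terms into your ansatz but never use them), the terms quadratic in $\mu$ are the $s$-independent terms; they only determine the scalar $\beta(t)$ and cannot produce a matrix Riccati equation. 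Worse, from the gradient $\pl w/\pl s=2\Pi s+\alpha$ the minimizer is $u^{*}=-R^{-1}B^{T}\left(\Pi K\hat{s}+\Pi\mu+\tfrac{1}{2}\alpha\right)$: the mean is multiplied by the \emph{same} matrix $\Pi$ as the fluctuation, so there is no way to ``identify which matrix multiplies the mean part'' without first handling the linear coefficient $\alpha$. Executed literally, your plan stalls here.

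The missing idea (the paper's Appendix E) is the treatment of the linear term. Matching the terms linear in $s$ gives $-\dot{\alpha}=(A-BR^{-1}B^{T}\Pi)^{T}\alpha-2\mcal{Q}\mu$ with $\mcal{Q}=(I-K)^{T}\Pi BR^{-1}B^{T}\Pi(I-K)$; one then verifies that the ansatz $\alpha=2\Upsilon\mu$ is consistent with the mean dynamics, derives the Riccati-type ODE $-\dot{\Upsilon}=(A-BR^{-1}B^{T}\Pi)^{T}\Upsilon+\Upsilon(A-BR^{-1}B^{T}\Pi)-\Upsilon BR^{-1}B^{T}\Upsilon-\mcal{Q}$ with $\Upsilon(T)=O$, and defines $\Psi:=\Pi+\Upsilon$. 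The crux is that when the $\Pi$ and $\Upsilon$ equations are added, the $\mcal{Q}$ terms cancel, and only then does $\Psi$ satisfy the full-information Riccati equation (\ref{eq: ODE of Psi}) with $\Psi(T)=P$; this cancellation is precisely the rigorous content of your intuition that ``the mean propagates under full-information feedback.'' That intuition can indeed be formalized by matching coefficients in the Bellman functional of Theorem \ref{theo: optimal control of GML-POSC based on Bellman eq}, where on Gaussians $V(t,p)=\tr(\Pi\Sigma)+\mu^{T}\Psi\mu+c(t)$ and the $\mu$-quadratic coefficient really is $\Psi$ --- but that is not the object you set up: for $w=\delta V/\delta p$ the mean-gain information sits in the linear coefficient $\alpha=2(\Psi-\Pi)\mu$, and your matching scheme never extracts it.
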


\begin{proof}
The proof is shown in Appendix E.
\end{proof}

(\ref{eq: ODE of Psi}) is the Riccati equation \cite{bensoussan_stochastic_1992,yong_stochastic_1999,bensoussan_estimation_2018}, which also appears in the LQG problem without memory limitation (\ref{eq: ODE of Psi LQG-POSC}). 
In contrast, (\ref{eq: ODE of Pi}) is a new equation of the LQG problem with memory limitation, which is called the partially observable Riccati equation in this paper. 
Because estimation and control are not clearly separated in the LQG problem with memory limitation, the Riccati equation (\ref{eq: ODE of Psi}) for control is modified to include estimation, which corresponds to the partially observable Riccati equation (\ref{eq: ODE of Pi}). 
As a result, the partially observable Riccati equation (\ref{eq: ODE of Pi}) may improve estimation as well as control. 

In order to support this interpretation, we analyze the partially observable Riccati equation (\ref{eq: ODE of Pi}) by comparing it with the Riccati equation (\ref{eq: ODE of Psi}). 
Since only the last term of (\ref{eq: ODE of Pi}) is different from (\ref{eq: ODE of Psi}), we denote it as follows: 
\begin{align}
	\mcal{Q}:=(I-K)^{T}\Pi BR^{-1}B^{T}\Pi (I-K). 
\end{align}
$\mcal{Q}$ can be calculated as follows: 
\begin{align}
	\mcal{Q}=\left(\begin{array}{cc}
		\mcal{P}_{xx}&-\mcal{P}_{xx}\Sigma_{xz}\Sigma_{zz}^{-1}\\ 
		-\Sigma_{zz}^{-1}\Sigma_{zx}\mcal{P}_{xx}&\Sigma_{zz}^{-1}\Sigma_{zx}\mcal{P}_{xx}\Sigma_{xz}\Sigma_{zz}^{-1}\\
	\end{array}\right),
	\label{eq: last term of POR eq}
\end{align}
where $\mcal{P}_{xx}:=(\Pi BR^{-1}B^{T}\Pi)_{xx}$. 
Because $\mcal{P}_{xx}\succeq O$ and $\Sigma_{zz}^{-1}\Sigma_{zx}\mcal{P}_{xx}\Sigma_{xz}\Sigma_{zz}^{-1}\succeq O$, 
$\Pi_{xx}$ and $\Pi_{zz}$ may be larger than $\Psi_{xx}$ and $\Psi_{zz}$, respectively. 
Because $\Pi_{xx}$ and $\Pi_{zz}$ are the negative feedback gains of the state $x$ and the memory $z$, respectively, 
$\mcal{Q}$ may decrease $\Sigma_{xx}$ and $\Sigma_{zz}$. 
Moreover, when $\Sigma_{xz}$ is positive/negative, $\Pi_{xz}$ may be smaller/larger than $\Psi_{xz}$, which may increase/decrease $\Sigma_{xz}$. 
The similar discussion is possible for $\Sigma_{zx}$, $\Pi_{zx}$, and $\Psi_{zx}$ because $\Sigma$, $\Pi$, and $\Psi$ are symmetric matrices. 
As a result, $\mcal{Q}$ may decrease the following conditional covariance matrix: 
\begin{align}
	\Sigma_{x|z}:=\Sigma_{xx}-\Sigma_{xz}\Sigma_{zz}^{-1}\Sigma_{zx}, \label{eq: estimation error}
\end{align}
which corresponds to the estimation error of the state from the memory. 
Therefore, the partially observable Riccati equation (\ref{eq: ODE of Pi}) may improve estimation as well as control, which is different from the Riccati equation (\ref{eq: ODE of Psi}). 

Because the problem of Sec. \ref{sec: LQG Problem formulation} is more special than that of Sec. \ref{sec: LQG Problem reformulation}, we can make a more specific discussion. 
In the problem of Sec. \ref{sec: LQG Problem formulation}, $\Psi_{xx}$ is the same with the solution of the Riccati equation of the conventional POSC (\ref{eq: ODE of Psi LQG-POSC}), and $\Psi_{xz}=O$, $\Psi_{zx}=O$, and $\Psi_{zz}=O$ are satisfied. 
As a result, the memory control does not appear in the Riccati equation of ML-POSC (\ref{eq: ODE of Psi}). 
In contrast, because of the last term of the partially observable Riccati equation (\ref{eq: ODE of Pi}), $\Pi_{xx}$ is not the solution of the Riccati equation (\ref{eq: ODE of Psi LQG-POSC}), and $\Pi_{xz}\neq O$, $\Pi_{zx}\neq O$, and $\Pi_{zz}\neq O$ are satisfied. 
As a result, the memory control appears in the partially observable Riccati equation (\ref{eq: ODE of Pi}), which may improve the state estimation.

\subsection{Comparison with completely observable stochastic control}
In this subsection, we compare ML-POSC with the COSC of the extended state. 
By applying Theorem \ref{theo: optimal control of COSC} into the LQG problem, the optimal control function of the COSC of the extended state can be obtained as follows: 
\begin{theo}\label{theo: optimal control of LQG in COSC}
In the LQG problem, the optimal control function of the COSC of the extended state is given by
\begin{align}
	u^{*}(t,s)=-R^{-1}B^{T}\Psi s
	=-R^{-1}B^{T}\left(\Psi\hat{s}+\Psi\mu\right),
	\label{eq: optimal control of LQG COSC}
\end{align}
where $\Psi(t)$ is the solution of the Riccati equation (\ref{eq: ODE of Psi}). 
\end{theo}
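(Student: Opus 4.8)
The plan is to specialize Theorem \ref{theo: optimal control of COSC} to the linear-quadratic extended-state problem. By that theorem, the optimal control is $u^{*}(t,s)=\argmin_{u}H(t,s,u,w(t,s))$ with $w$ solving the HJB equation (\ref{eq: HJB eq}), so the first step is to write the Hamiltonian (\ref{eq: Hamiltonian}) explicitly. For the dynamics (\ref{SDE of LQG}) and cost (\ref{OF of LQG}) (dropping tildes), the backward operator $\mcal{L}$ acts with drift $As+Bu$ and diffusion $\sigma\sigma^{T}$, so that $f(t,s,u)=s^{T}Qs+u^{T}Ru$ and
\begin{align}
	H(t,s,u,w)=s^{T}Qs+u^{T}Ru+(As+Bu)^{T}\nabla_{s}w+\tfrac{1}{2}\mathrm{tr}\left(\sigma\sigma^{T}\nabla_{s}^{2}w\right).
\end{align}

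Next I would carry out the minimization over $u$. Since $R\succ O$, the map $u\mapsto H$ is strictly convex, so the minimizer is the unique stationary point of $2Ru+B^{T}\nabla_{s}w=0$, namely
\begin{align}
	u^{*}=-\tfrac{1}{2}R^{-1}B^{T}\nabla_{s}w.
\end{align}
Substituting this back closes the HJB equation into a nonlinear partial differential equation for $w$ alone.

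The central step is then the quadratic ansatz $w(t,s)=s^{T}\Psi(t)s+r(t)$, with $\Psi(T)=P$ and $r(T)=0$ enforced by the terminal condition $w(T,s)=g(s)=s^{T}Ps$. With $\nabla_{s}w=2\Psi s$ and $\nabla_{s}^{2}w=2\Psi$, the proposed control reads $u^{*}(t,s)=-R^{-1}B^{T}\Psi s$. Plugging the ansatz into the closed HJB equation and separating the terms quadratic in $s$ from the $s$-independent remainder is where the real verification lives: after symmetrizing $s^{T}A^{T}\Psi s$ into $\tfrac{1}{2}s^{T}(A^{T}\Psi+\Psi A)s$, matching the quadratic part must reproduce exactly the Riccati equation (\ref{eq: ODE of Psi}), while the leftover trace term is absorbed into $r$ via $\dot r=-\mathrm{tr}(\sigma\sigma^{T}\Psi)$. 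The point requiring care is self-consistency: because the dynamics are linear and the cost quadratic, substituting the quadratic ansatz generates no terms of degree higher than two in $s$, so the single matrix ODE (\ref{eq: ODE of Psi}) suffices, and $r(t)$, having zero gradient, never enters $u^{*}$. I do not expect a genuine obstacle beyond this bookkeeping of the quadratic-form algebra.

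Finally, I would record the two equivalent forms. Having established $u^{*}(t,s)=-R^{-1}B^{T}\Psi s$, the substitution $s=\hat{s}+\mu$ immediately yields $u^{*}(t,s)=-R^{-1}B^{T}(\Psi\hat{s}+\Psi\mu)$, which is the claimed statement. The only conceptual check worth emphasizing is that the COSC feedback gain multiplying the fluctuation $\hat{s}$ is $\Psi$ itself, in contrast to the gain $\Pi K$ appearing in the ML-POSC result of Theorem \ref{theo: optimal control of LQG in ML-POSC}; this reflects that full observation of the extended state $s$ removes the need for the inference gain $K$ and the partially observable Riccati matrix $\Pi$.
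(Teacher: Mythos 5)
Your proposal is correct and is essentially the same argument as the paper's: the paper proves this theorem by citation to the standard references, and the classical proof found there is exactly your route — specialize the HJB equation of Theorem \ref{theo: optimal control of COSC} to linear dynamics and quadratic cost, minimize the Hamiltonian using $R\succ O$, and close with the quadratic ansatz $w(t,s)=s^{T}\Psi(t)s+r(t)$, whose quadratic part yields the Riccati equation (\ref{eq: ODE of Psi}) and whose $s$-independent part is absorbed into $r$. Your derivation is also precisely the $K=I$ specialization of the paper's own Appendix E computation for ML-POSC, consistent with the paper's remark that setting $K=I$ reduces (\ref{eq: ODE of Pi}) to (\ref{eq: ODE of Psi}) and (\ref{eq: optimal control of LQG}) to (\ref{eq: optimal control of LQG COSC}).
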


\begin{proof}
The proof is shown in \cite{yong_stochastic_1999,bensoussan_estimation_2018}. 
\end{proof}

The optimal control function of the COSC of the extended state (\ref{eq: optimal control of LQG COSC}) can be derived intuitively from that of ML-POSC (\ref{eq: optimal control of LQG}). 
In ML-POSC, $K\hat{s}=\mb{E}_{p_{t}(x|z)}\left[\hat{s}\right]$ is the estimator of the stochastic extended state. 
In the COSC of the extended state, because the stochastic extended state is completely observable, its estimator is given by $\hat{s}$, which corresponds to $K=I$. 
By changing the definition of $K$ from (\ref{eq: inference gain}) to $K=I$, the partially observable Riccati equation (\ref{eq: ODE of Pi}) is reduced to the Riccati equation (\ref{eq: ODE of Psi}), 
and the optimal control function of ML-POSC (\ref{eq: optimal control of LQG}) is reduced to that of COSC (\ref{eq: optimal control of LQG COSC}). 
As a result, the optimal control function of ML-POSC (\ref{eq: optimal control of LQG}) can be interpreted as the generalization of that of COSC (\ref{eq: optimal control of LQG COSC}). 

While the second term is the same between (\ref{eq: optimal control of LQG}) and (\ref{eq: optimal control of LQG COSC}), the first term is different. 
The second term is the control of the expected extended state $\mu$, which does not depend on the realization. 
In contrast, the first term is the control of the stochastic extended state $\hat{s}$, which depends on the realization. 
The first term has two different points: 
(i) The estimators of the stochastic extended state in COSC and ML-POSC are given by $\hat{s}$ and $K\hat{s}=\mb{E}_{p_{t}(x|z)}\left[\hat{s}\right]$, respectively. 
This is reasonable because ML-POSC needs to estimate the state from the memory. 
(ii) The control gains of the stochastic extended state in COSC and ML-POSC are given by $\Psi$ and $\Pi$, respectively. 
While $\Psi$ improves only control, $\Pi$ improves estimation as well as control. 

\subsection{Numerical algorithm}
In the LQG problem, the partial differential equations are reduced to the ordinary differential equations. 
The FP equation (\ref{eq: FP eq}) is reduced to (\ref{eq: ODE of mu}) and (\ref{eq: ODE of Sigma}), and the HJB equation (\ref{eq: HJB eq}) is reduced to (\ref{eq: ODE of Psi}) and (\ref{eq: ODE of Pi}). 
As a result, the optimal control function (\ref{eq: optimal control of LQG}) can be obtained more easily in the LQG problem. 

The Riccati equation (\ref{eq: ODE of Psi}) can be solved backward from the terminal condition. 
In contrast, the partially observable Riccati equation (\ref{eq: ODE of Pi}) cannot be solved in the same way as the Riccati equation (\ref{eq: ODE of Psi}) because it depends on the forward equation of $\Sigma$ (\ref{eq: ODE of Sigma}) through $K$ (\ref{eq: inference gain}). 
Because the forward equation of $\Sigma$ (\ref{eq: ODE of Sigma}) also depends on the backward equation of $\Pi$ (\ref{eq: ODE of Pi}), they must be solved simultaneously. 

The similar problem also appears in the mean-field game and control, and numerous numerical methods have been developed \cite{lauriere_numerical_2021}. 
In this paper, we solve the system of (\ref{eq: ODE of Sigma}) and (\ref{eq: ODE of Pi}) by using the forward-backward sweep method (fixed-point iteration method), which computes (\ref{eq: ODE of Sigma}) and (\ref{eq: ODE of Pi}) alternately \cite{lauriere_numerical_2021,tottori_notitle_2022}. 
In ML-POSC, the convergence of the forward-backward sweep method is guaranteed \cite{tottori_notitle_2022}. 

\section{NUMERICAL EXPERIMENTS}\label{sec: Numerical experiments}
In this section, we demonstrate the effectiveness of ML-POSC by the numerical experiments on the LQG problem with memory limitation and the non-LQG problem. 

\subsection{LQG problem with memory limitation}
\begin{figure*}[t]
\begin{center}
	\begin{minipage}[t][][b]{45mm}
	(a)
	\end{minipage}
	\begin{minipage}[t][][b]{45mm}
	(b)
	\end{minipage}
	\begin{minipage}[t][][b]{45mm}
	(c)
	\end{minipage}\\
	\begin{minipage}[t][][b]{45mm}
		\includegraphics[width=45mm]{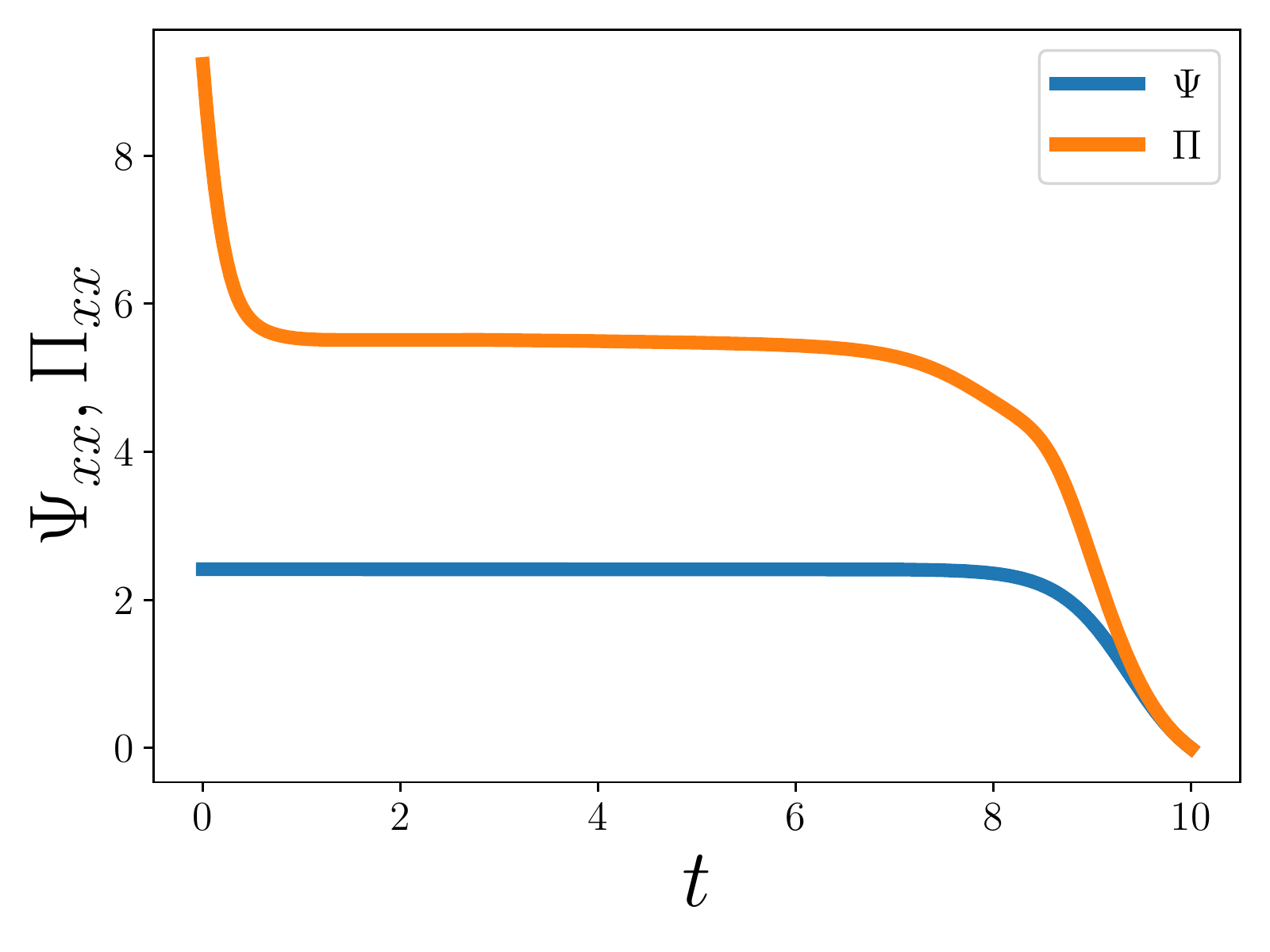}
	\end{minipage}
	\begin{minipage}[t][][b]{45mm}
		\includegraphics[width=45mm]{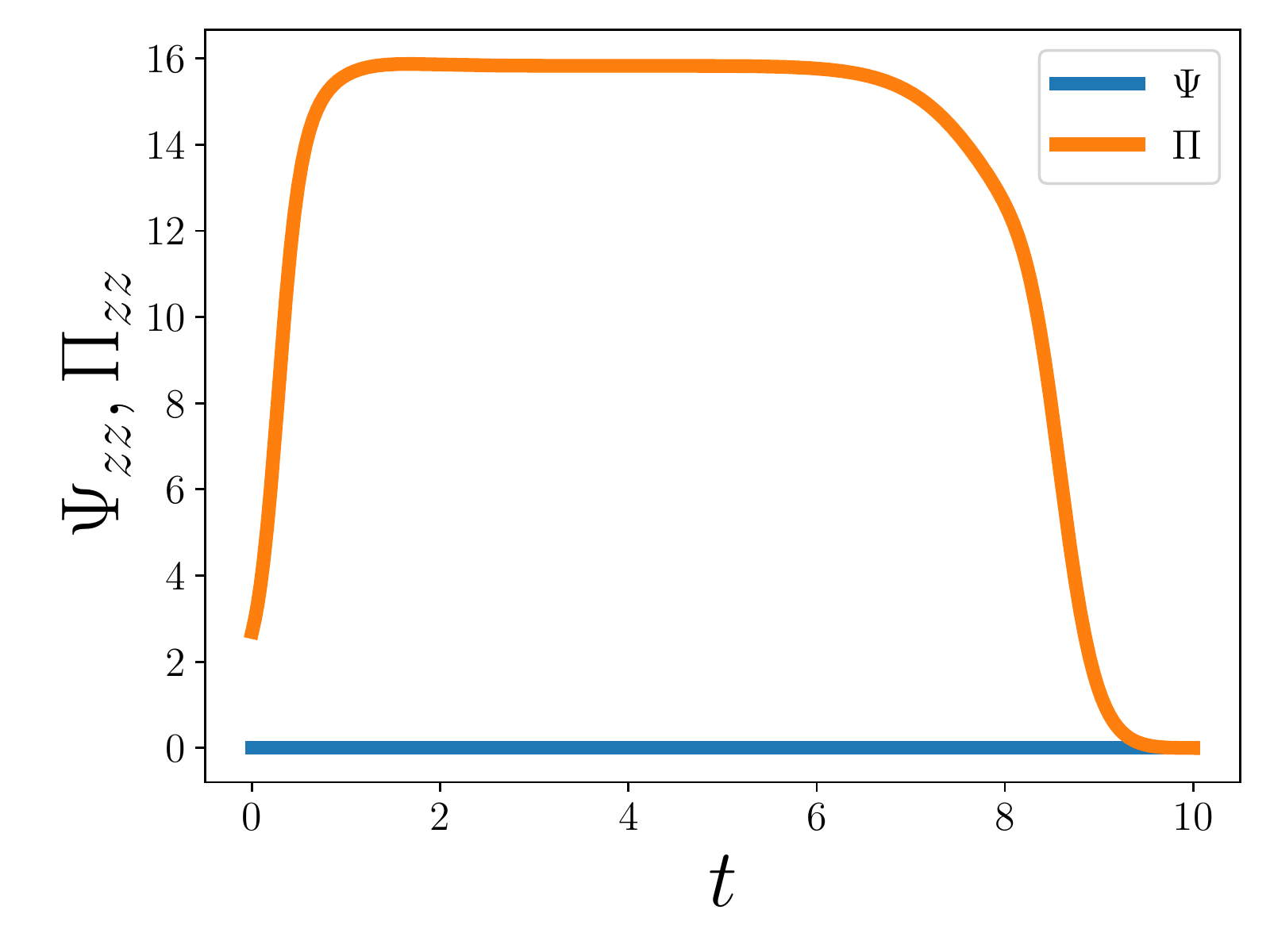}
	\end{minipage}
	\begin{minipage}[t][][b]{45mm}
		\includegraphics[width=45mm]{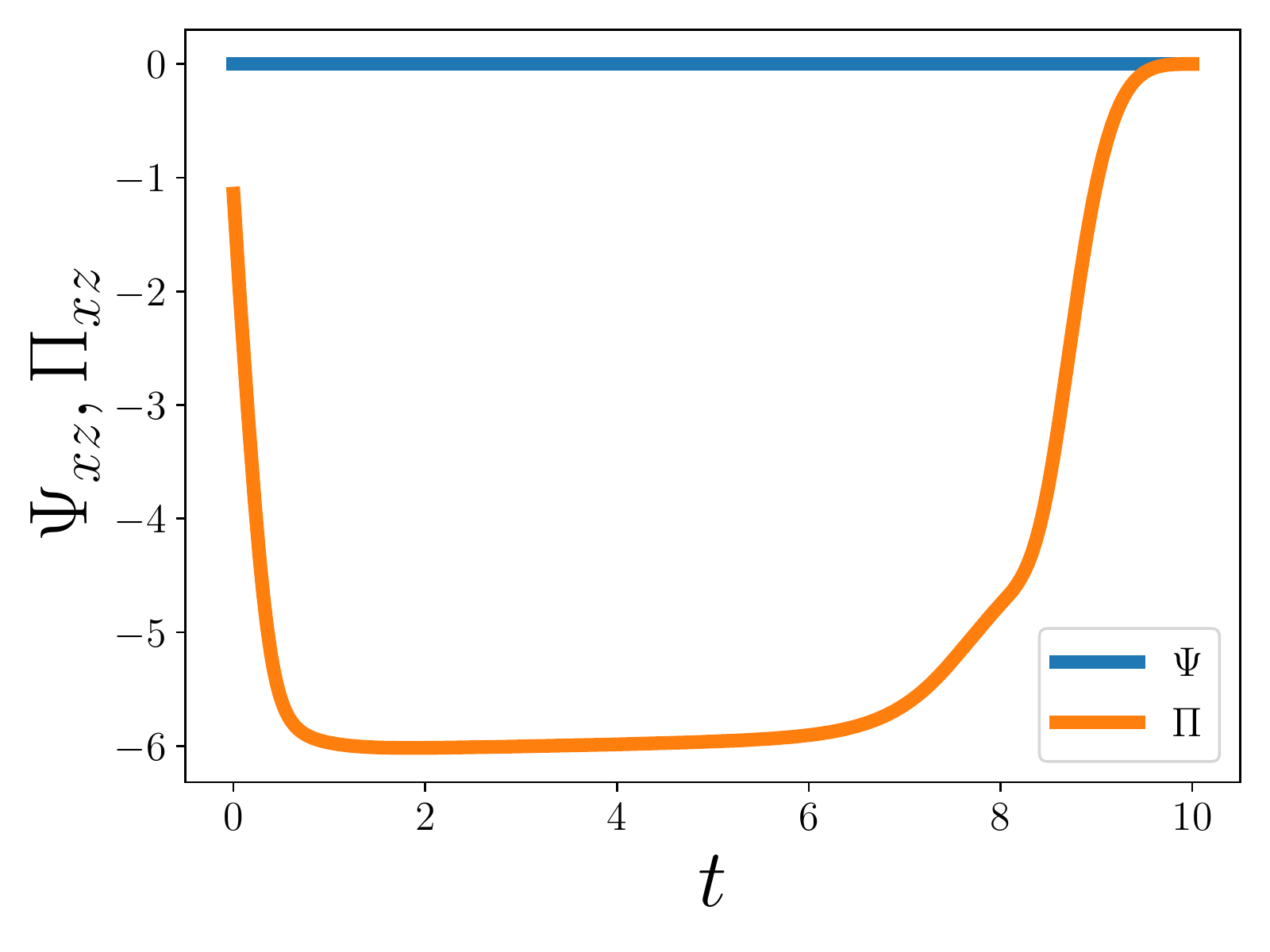}
	\end{minipage}\\
	\begin{minipage}[t][][b]{45mm}
	(d)
	\end{minipage}
	\begin{minipage}[t][][b]{45mm}
	(e)
	\end{minipage}
	\begin{minipage}[t][][b]{45mm}
	(f)
	\end{minipage}\\
	\begin{minipage}[t][][b]{45mm}
		\includegraphics[width=45mm]{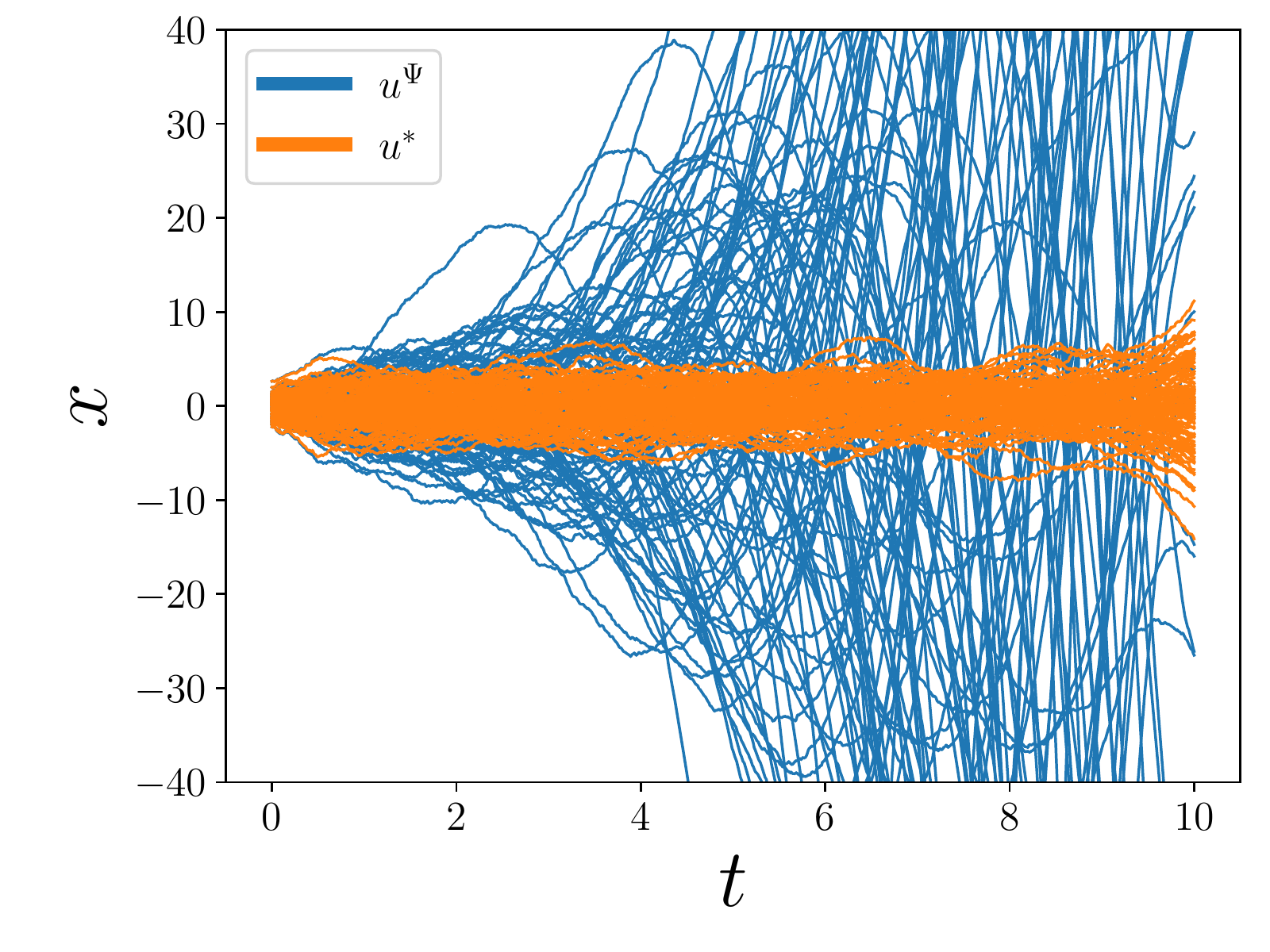}
	\end{minipage}
	\begin{minipage}[t][][b]{45mm}
		\includegraphics[width=45mm]{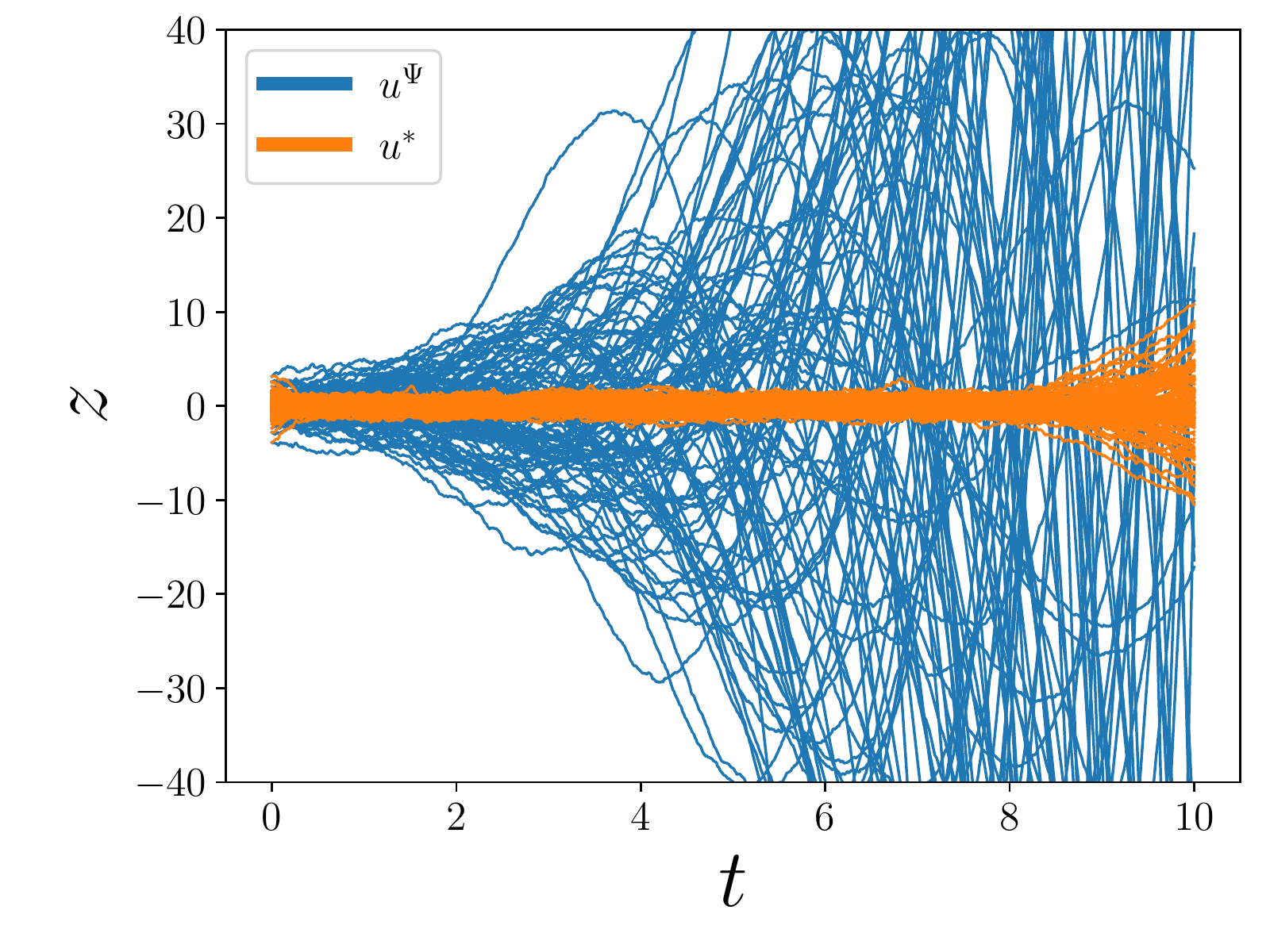}
	\end{minipage}
	\begin{minipage}[t][][b]{45mm}
		\includegraphics[width=45mm]{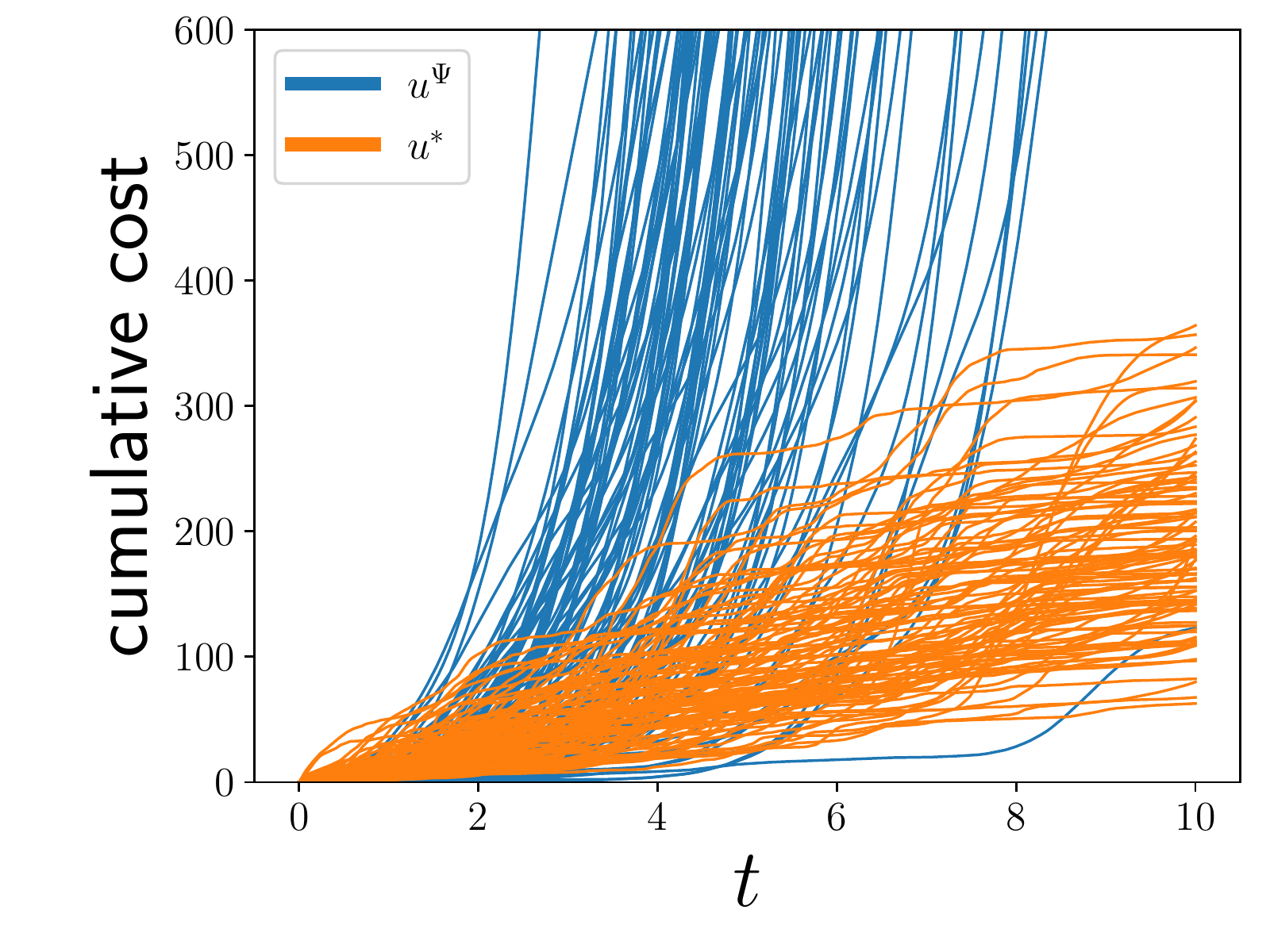}
	\end{minipage}
	\caption{
	Numerical simulation of the LQG problem with memory limitation. 
	(a,b,c) Trajectories of $\Psi(t)$ and $\Pi(t)$. 
	(d,e,f) Stochastic behaviors of the state $x_{t}$ (d), the memory $z_{t}$ (e), and the cumulative cost (f) for 100 samples. 
	The expectation of the cumulative cost at $t=10$ corresponds to the objective function (\ref{eq: OF of ML-POSC LQG NE}).
	Blue and orange curves are controlled by (\ref{eq: Psi control of LQG}) and (\ref{eq: optimal control of LQG}), respectively. 
	}
	\label{fig: LQG}
\end{center}
\end{figure*}

In this subsection, we show the significance of the partially observable Riccati equation (\ref{eq: ODE of Pi}) by a numerical experiment of the LQG problem with memory limitation. 
We consider the state $x_{t}\in\mb{R}$, the observation $y_{t}\in\mb{R}$, and the memory $z_{t}\in\mb{R}$, which evolve by the following SDEs: 
\begin{align}
	dx_{t}&=\left(x_{t}+u_{t}\right)dt+d\omega_{t},\label{eq: state SDE LQG NE}\\
	dy_{t}&=x_{t}dt+d\nu_{t},\label{eq: observation SDE LQG NE}\\
	dz_{t}&=v_{t}dt+dy_{t},\label{eq: memory SDE LQG NE}
\end{align}
where $x_{0}$ and $z_{0}$ obey the standard Gaussian distributions, $y_{0}$ is an arbitrary real number, $\omega_{t}\in\mb{R}$ and $\nu_{t}\in\mb{R}$ are independent standard Wiener processes, $u_{t}=u(t,z_{t})\in\mb{R}$ and $v_{t}=v(t,z_{t})\in\mb{R}$ are the controls. 
The objective function to be minimized is given as follows: 
\begin{align}	
	J[u,v]:=\mb{E}\left[\int_{0}^{10}\left(x_{t}^{2}+u_{t}^{2}+v_{t}^{2}\right)dt\right].
	\label{eq: OF of ML-POSC LQG NE}
\end{align} 
Therefore, the objective of this problem is to minimize the state variance by the small state control and memory control. 
Because this problem includes the memory control cost, it corresponds to the LQG problem with memory limitation. 

Fig. \ref{fig: LQG} (a,b,c) shows the trajectories of $\Psi(t)$ and $\Pi(t)$. 
$\Pi_{xx}$ and $\Pi_{zz}$ are larger than $\Psi_{xx}$ and $\Psi_{zz}$, respectively, and $\Pi_{xz}$ is smaller than $\Psi_{xz}$, 
which is consistent with our discussion in Sec. \ref{sec: LQG Derivation of optimal control function}. 
Therefore, the partially observable Riccati equation may reduce the estimation error of the state from the memory. 
Moreover, while the memory control does not appear in the Riccati equation ($\Psi_{xz}=\Psi_{zz}=0$), it appears in the partially observable Riccati equation ($\Pi_{xz}\neq 0$, $\Pi_{zz}\neq 0$), which is also consistent with our discussion in Sec. \ref{sec: LQG Derivation of optimal control function}. 
As a result, the memory control plays an important role in estimating the state from the memory. 

In order to clarify the significance of the partially observable Riccati equation (\ref{eq: ODE of Pi}), 
we compare the performance of the optimal control function (\ref{eq: optimal control of LQG}) with that of the following control function: 
\begin{align}
	u^{\Psi}(t,z)=-R^{-1}B^{T}\left(\Psi K\hat{s}+\Psi\mu\right), 
	\label{eq: Psi control of LQG}
\end{align}
which replaces $\Pi$ with $\Psi$. 
This result is shown in Fig. \ref{fig: LQG} (d,e,f).  
In the control function (\ref{eq: Psi control of LQG}), the distributions of the state and the memory are unstable, and the cumulative cost diverges. 
By contrast, in the optimal control function (\ref{eq: optimal control of LQG}), the distributions of the state and the memory are stable, and the cumulative cost is smaller. 
This result indicates that the partially observable Riccati equation (\ref{eq: ODE of Pi}) plays an important role in the LQG problem with memory limitation. 

\subsection{Non-LQG problem}
\begin{figure}[t]
\begin{center}
	\begin{minipage}[t][][b]{42mm}
	(a)
	\end{minipage}
	\begin{minipage}[t][][b]{42mm}
	(b)
	\end{minipage}\\
	\begin{minipage}[t][][b]{42mm}
		\includegraphics[width=42mm]{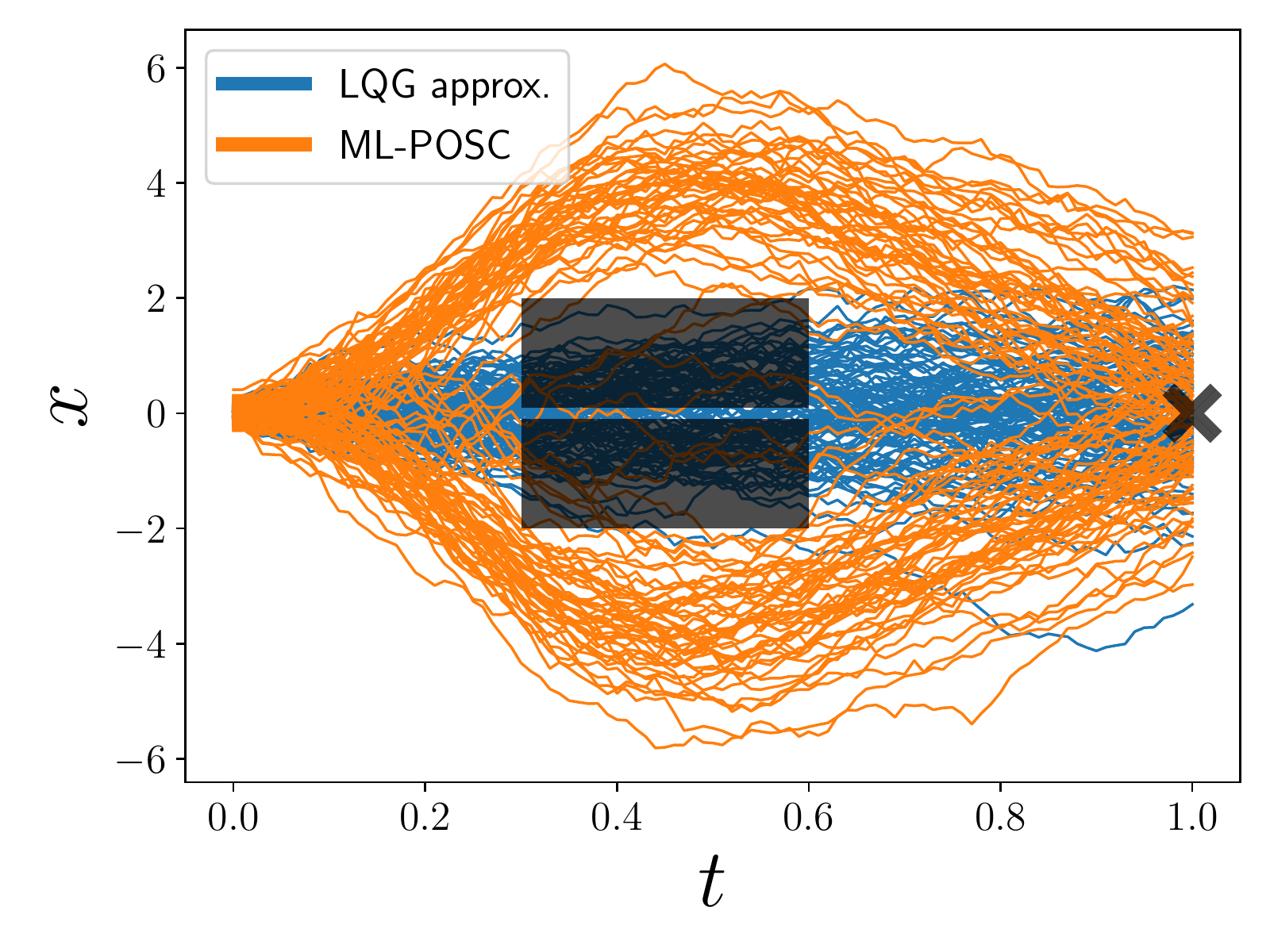}
	\end{minipage}
	\begin{minipage}[t][][b]{42mm}
		\includegraphics[width=42mm]{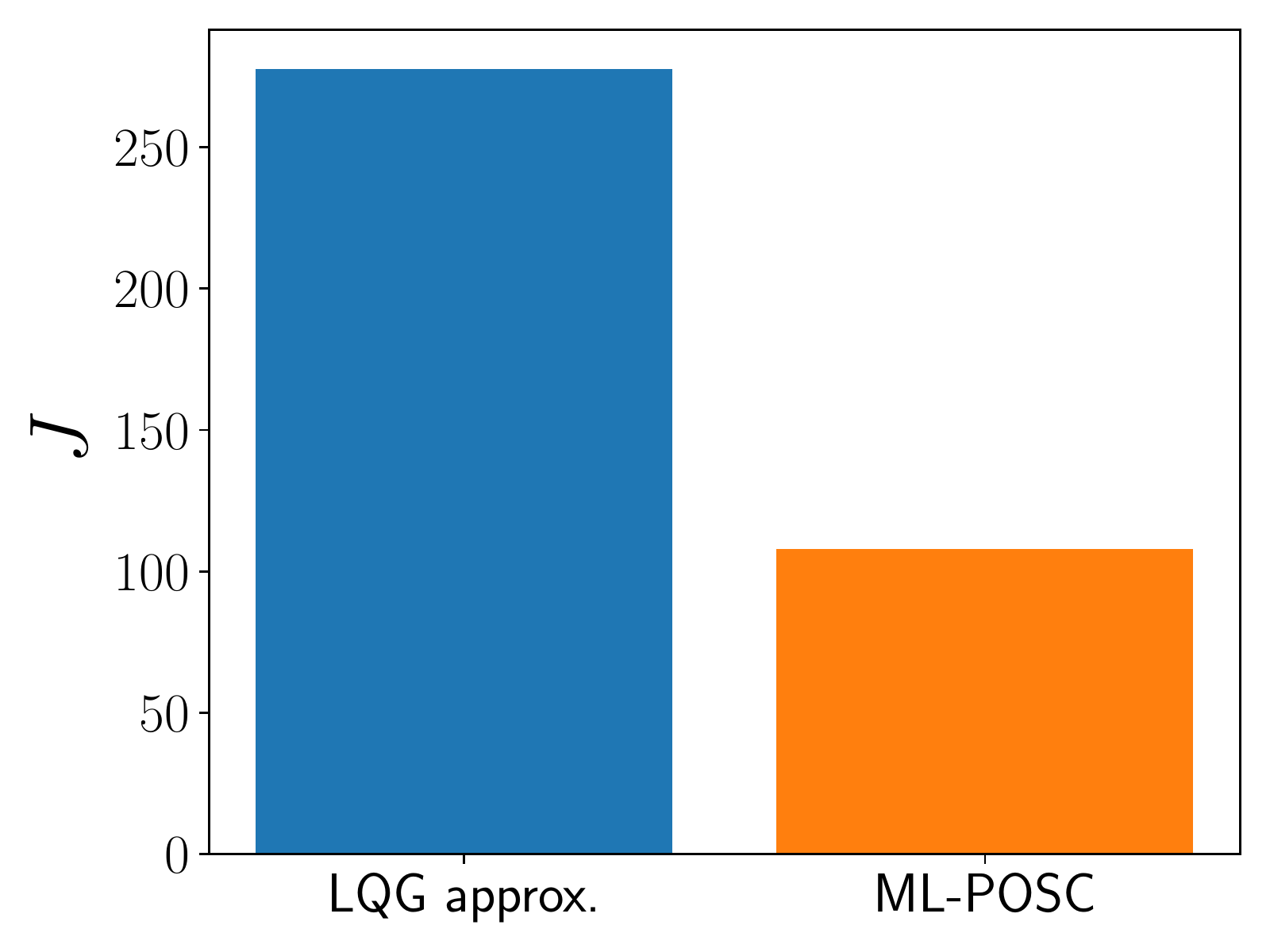}
	\end{minipage}
	\caption{
	Numerical simulation of the non-LQG problem for the local LQG approximation (blue) and ML-POSC (orange). 
	(a) Stochastic behaviors of the state $x_{t}$ for 100 samples. 
	The black rectangles and cross represent the obstacles and the goal, respectively. 
	(b) The objective function (\ref{eq: OF of ML-POSC non-LQG NE}), which is computed from 100 samples. 
	}
	\label{fig: non-LQG}
\end{center}
\end{figure}

In this subsection, we investigate the potential effectiveness of ML-POSC to a non-LQG problem by comparing it with the local LQG approximation of the conventional POSC \cite{li_iterative_2006,li_iterative_2007}.
We consider the state $x_{t}\in\mb{R}$ and the observation $y_{t}\in\mb{R}$, which evolve by the following SDEs: 
\begin{align}
	dx_{t}&=u_{t}dt+d\omega_{t},\label{eq: state SDE non-LQG NE}\\
	dy_{t}&=x_{t}dt+d\nu_{t},\label{eq: observation SDE non-LQG NE}
\end{align}
where $x_{0}$ obeys the Gaussian distribution $p_{0}(x_{0})=\mcal{N}(x_{0}|0,0.01)$, $y_{0}$ is an arbitrary real number, $\omega_{t}\in\mb{R}$ and $\nu_{t}\in\mb{R}$ are independent standard Wiener processes, $u_{t}=u(t,y_{0:t})\in\mb{R}$ is the control. 
The objective function to be minimized is given as follows: 
\begin{align}	
	J[u]:=\mb{E}\left[\int_{0}^{1}\left(Q(t,x_{t})+u_{t}^{2}\right)dt+10x_{1}^{2}\right], 
	\label{eq: OF of ML-POSC non-LQG NE}
\end{align} 
where 
\begin{align}
	Q(t,x):=\begin{cases}
		1000&(0.3\leq t\leq 0.6, 0.1\leq |x|\leq 2.0),\\
		0&(others).
	\end{cases}
	\label{eq: state cost function of non-LQG NE}
\end{align}
The cost function is large on the black rectangles in Fig. \ref{fig: non-LQG} (a), which represents the obstacles. 
In addition, the terminal cost function is the smallest on the black cross in Fig. \ref{fig: non-LQG} (a), which represents the desirable goal.
Therefore, the system should avoid the obstacles and reach the goal with the small control. 
Because the cost function is non-quadratic, it is a non-LQG problem, which cannot be solved exactly by the conventional POSC. 

In the local LQG approximation of the conventional POSC \cite{li_iterative_2006,li_iterative_2007}, the Zakai equation and the Bellman equation are locally approximated by the Kalman filter and the Riccati equation, respectively. 
Because the Bellman equation (functional differential equation) is reduced to the Riccati equation (ordinary differential equation), the local LQG approximation can be solved numerically even in the non-LQG problem. 

ML-POSC determines the control $u_{t}\in\mb{R}$ based on the memory $z_{t}\in\mb{R}$, i.e., $u_{t}=u(t,z_{t})$. 
The memory dynamics is formulated with the following SDE: 
\begin{align}
	dz_{t}&=dy_{t},\label{eq: memory SDE non-LQG NE}
\end{align}
where $p_{0}(z_{0})=\mcal{N}(z_{0}|0,0.01)$. 
For the sake of simplicity, the memory control is not considered. 
In ML-POSC, the Bellman equation (functional differential equation) is reduced to the HJB equation (partial differential equation) by employing the mathematical technique of the mean-field control theory. 
As a result, ML-POSC can be solved numerically even in the non-LQG problem. 

Fig. \ref{fig: non-LQG} is the numerical result comparing the local LQG approximation and ML-POSC. 
Because the local LQG approximation reduces the Bellman equation to the Riccati equation by ignoring non-LQG information, it cannot avoid the obstacles, which results in the higher objective function. 
In contrast, because ML-POSC reduces the Bellman equation to the HJB equation while maintaining non-LQG information, it can avoid the obstacles, which results in the lower objective function. 
Therefore, our numerical experiment shows that ML-POSC can be superior to the local LQG approximation.

\section{CONCLUSION}\label{sec: Conclusion}
In this work, we propose ML-POSC, which is the alternative theoretical framework to the conventional POSC. 
ML-POSC first formulates the finite-dimensional and stochastic memory dynamics explicitly, and then optimizes the memory dynamics considering the memory cost. 
As a result, unlike the conventional POSC, ML-POSC can consider memory limitation as well as incomplete information. 
Furthermore, because the optimal control function of ML-POSC is obtained by solving the system of HJB-FP equations, ML-POSC can be solved in practice even in a non-LQG problem. 
ML-POSC can generalize the LQG problem to include memory limitation. 
Because estimation and control are not clearly separated in the LQG problem with memory limitation, the Riccati equation is modified to the partially observable Riccati equation, which improves estimation as well as control. 
Furthermore, ML-POSC can provide a better result than the local LQG approximation in a non-LQG problem because ML-POSC reduces the Bellman equation while maintaining non-LQG information. 

ML-POSC is also effective to the state estimation problem, which is a part of the POSC problem. 
Although the state estimation problem can be solved in principle by the Zakai equation \cite{crisan_survey_2002,budhiraja_survey_2007,bain_fundamentals_2009}, it cannot be solved directly because the Zakai equation is infinite-dimensional. 
In order to resolve this problem, the particle filter is often used, which approximates the infinite-dimensional Zakai equation by  a finite number of particles \cite{crisan_survey_2002,budhiraja_survey_2007,bain_fundamentals_2009}. 
However, because the performance of the particle filter is guaranteed only in the limit of a large number of particles, the particle filter may not be practical in the case where the available memory size is severely limited.  
Furthermore, the particle filter cannot take the memory noise and cost into account. 
ML-POSC resolves these problems because it can optimize the state estimation under memory limitation. 

ML-POSC may be extended from a single-agent system to a multi-agent system. 
POSC of a multi-agent system is called decentralized stochastic control (DSC)  \cite{nayyar_decentralized_2013,charalambous_centralized_2017,charalambous_centralized_2018}, which consists of a system and multiple controllers. 
In DSC, each controller needs to estimate the controls of the other controllers as well as the state of the system, which is essentially different from the conventional POSC. 
Because the estimation among the controllers is generally intractable, the conventional POSC approach cannot be straightforwardly extended to DSC. 
In contrast, ML-POSC compresses the observation history into the finite-dimensional memory, which simplifies the estimation among the controllers. 
Therefore, ML-POSC may provide an effective approach to DSC. 
Actually, the finite-state controller, whose idea is similar with ML-POSC, plays a key role in extending POMDP from a single-agent system to a multi-agent system \cite{oliehoek_concise_2016,bernstein_bounded_2005,bernstein_policy_2009,amato_optimizing_2007,amato_finite-state_2010,tottori_forward_2021}. 
ML-POSC may also be extended to a multi-agent system in the similar way as the finite-state controller. 

ML-POSC can be naturally extended to the mean-field control setting \cite{bensoussan_mean_2013,carmona_probabilistic_2018,carmona_probabilistic_2018-1} because ML-POSC is solved based on the mean-field control theory. 
Therefore, ML-POSC can be applied to an infinite number of homogeneous agents. 
Furthermore, ML-POSC can also be extended to a risk-sensitive setting because it is a special case of the mean-field control setting \cite{bensoussan_mean_2013,carmona_probabilistic_2018,carmona_probabilistic_2018-1}. 
Therefore, ML-POSC can consider the variance of the cost as well as its expectation. 

In order to solve ML-POSC with a high-dimensional state and memory, more efficient algorithms are needed. 
In the mean-field game and control, neural network-based algorithms have recently been proposed, which can solve high-dimensional problems efficiently \cite{ruthotto_machine_2020,lin_alternating_2021}. 
By extending these algorithms, high-dimensional ML-POSC may be solved efficiently. 
Furthermore, unlike the mean-field game and control, the coupling of HJB-FP equations is limited to the optimal control function in ML-POSC. 
By exploiting this property, more efficient algorithms may be proposed in ML-POSC \cite{tottori_notitle_2022}.

\section*{APPENDIX}
\subsection{Proof of Theorem \ref{theo: optimal control of GML-POSC based on Bellman eq}}
We define the value function $V(t,p)$ as follows: 
\begin{align}
	V(t,p):=\min_{u_{t:T}}\left[\int_{t}^{T}\bar{f}(t,p_{\tau},u_{\tau})d\tau+\bar{g}_{T}(p_{T})\right],
\end{align}
where $\{p_{\tau}|\tau\in[t,T]\}$ is the solution of the FP equation (\ref{eq: FP eq}) where $p_{t}=p$. 
$V(t,p)$ can be calculated as follows: 
\begin{align}
	V(t,p)
	&=\min_{u}\left[\bar{f}(t,p,u)dt+V(t+dt,p+\mcal{L}^{\dagger}pdt)\right]\nonumber\\
	&=\min_{u}\left[\bar{f}(t,p,u)dt+V(t,p)+\frac{\pl V(t,p)}{\pl t}dt+\left(\int \frac{\delta V(t,p)}{\delta p}(s)\mcal{L}^{\dagger}p(s)ds\right)dt\right]. 
\end{align}
Rearranging the above equation, the following equation is obtained: 
\begin{align}
	-\frac{\pl V(t,p)}{\pl t}=\min_{u}\left[\bar{f}(t,p,u)+\int \frac{\delta V(t,p)}{\delta p}(s)\mcal{L}^{\dagger}p(s)ds\right].
\end{align}
Because 
\begin{align}
	\int \frac{\delta V(t,p)}{\delta p}(s)\mcal{L}^{\dagger}p(s)ds
	=\int p(s)\mcal{L}\frac{\delta V(t,p)}{\delta p}(s)ds, 
\end{align}
the following Bellman equation is obtained: 
\begin{align}
	-\frac{\pl V(t,p)}{\pl t}=\min_{u}\mb{E}_{p(s)}\left[H\left(t,s,u,\frac{\delta V(t,p)}{\delta p}(s)\right)\right].
	\label{eq: Bellman eq of GML-POSC tmp}
\end{align}

Because the control $u$ is the function of the memory $z$ in ML-POSC, 
the minimization by $u$ can be exchanged with the expectation by $p(z)$ as follows: 
\begin{align}
	-\frac{\pl V(t,p)}{\pl t}=\mb{E}_{p(z)}\left[\min_{u}\mb{E}_{p(x|z)}\left[H\left(t,s,u,\frac{\delta V(t,p)}{\delta p}(s)\right)\right]\right].
	\label{eq: Bellman eq of GML-POSC exchanged version}
\end{align}
Because the optimal control function is given by the right-hand side of the Bellman equation (\ref{eq: Bellman eq of GML-POSC exchanged version}) \cite{yong_stochastic_1999}, the optimal control function is given by
\begin{align}
	u^{*}(t,z,p)=\argmin_{u}\mb{E}_{p(x|z)}\left[H\left(t,s,u,\frac{\delta V(t,p)}{\delta p}(s)\right)\right]. 
\end{align}
Because the FP equation (\ref{eq: FP eq}) is deterministic, the optimal control function is given by $u^{*}(t,z)=u^{*}(t,z,p_{t})$.

\subsection{Proof of Theorem \ref{theo: optimal control of GML-POSC}}
We first define 
\begin{align}
	W(t,p,s):=\frac{\delta V(t,p)}{\delta p}(s), 
\end{align}
which satisfies $W(T,p,s)=g(s)$. 
Differentiating the Bellman equation (\ref{eq: Bellman eq of GML-POSC}) with respect to $p$, 
the following equation is obtained: 
\begin{align}
	-\frac{\pl W(t,p,s)}{\pl t}=H\left(t,s,u^{*},W\right)
	+\mb{E}_{p(s')}\left[\mcal{L}\frac{\delta W(t,p,s')}{\delta p}(s)\right]. 
\end{align}
Because 
\begin{align}
	\int\mcal{L}\frac{\delta W(t,p,s)}{\delta p}(s')p(s')ds'
	=\int\frac{\delta W(t,p,s)}{\delta p}(s')\mcal{L}^{\dagger}p(s')ds',
\end{align}
the following equation is obtained: 
\begin{align}
	-\frac{\pl W(t,p,s)}{\pl t}
	&=H\left(t,s,u^{*},W\right)
	+\int\frac{\delta W(t,p,s)}{\delta p}(s')\mcal{L}^{\dagger}p(s')ds'. 
	\label{eq: Master eq}
\end{align} 

We then define 
\begin{align}
	w(t,s):=W(t,p_{t},s), 
\end{align}
where $p_{t}$ is the solution of the FP equation (\ref{eq: FP eq}). 
The time derivative of $w(t,s)$ can be calculated as follows: 
\begin{align}
	\frac{\pl w(t,s)}{\pl t}
	&=\frac{\pl W(t,p_{t},s)}{\pl t}+\int\frac{\delta W(t,p_{t},s)}{\delta p}(s')\frac{\pl p_{t}(s')}{\pl t}ds'\nonumber\\
	&=\frac{\pl W(t,p_{t},s)}{\pl t}+\int\frac{\delta W(t,p_{t},s)}{\delta p}(s')\mcal{L}^{\dagger}p_{t}(s')ds'. 
	\label{eq: the partial derivative of w(t,s)}
\end{align}
By substituting (\ref{eq: Master eq}) into (\ref{eq: the partial derivative of w(t,s)}), the HJB equation (\ref{eq: HJB eq}) is obtained. 

\subsection{Proof of Theorem \ref{theo: optimal control of COSC}}
From the proof of Theorem \ref{theo: optimal control of GML-POSC based on Bellman eq} (Appendix A), the Bellman equation (\ref{eq: Bellman eq of GML-POSC tmp}) is obtained. 
Because the control $u$ is the function of the extended state $s$ in the COSC of the extended state, 
the minimization by $u$ can be exchanged with the expectation by $p(s)$ as follows: 
\begin{align}
	-\frac{\pl V(t,p)}{\pl t}=\mb{E}_{p(s)}\left[\min_{u}H\left(t,s,u,\frac{\delta V(t,p)}{\delta p}(s)\right)\right].
	\label{eq: Bellman eq of COSC exchanged version}
\end{align}
Because the optimal control function is given by the right-hand side of the Bellman equation (\ref{eq: Bellman eq of COSC exchanged version}) \cite{yong_stochastic_1999}, the optimal control function is given by
\begin{align}
	u^{*}(t,s,p)=\argmin_{u}H\left(t,s,u,\frac{\delta V(t,p)}{\delta p}(s)\right). 
\end{align}
Because the FP equation (\ref{eq: FP eq}) is deterministic, the optimal control function is given by $u^{*}(t,s)=u^{*}(t,s,p_{t})$.
The rest of the proof is the same with the proof of Theorem \ref{theo: optimal control of GML-POSC} (Appendix B).

\subsection{Proof of Theorem \ref{theo: optimal control of ML-POSC in LQG-POSC}}
From Theorem \ref{theo: optimal control of GML-POSC}, the optimal control functions $u^{*}$, $v^{*}$, and $\kappa^{*}$ are given by the minimization of the conditional expected Hamiltonian as follows: 
\begin{align}
	u^{*}(t,z),v^{*}(t,z),\kappa^{*}(t,z)=\argmin_{u,v,\kappa}\mb{E}_{p_{t}(x|z)}\left[H\left(t,s,u,v,\kappa,w\right)\right].
\end{align}
In the LQG problem of the conventional POSC, the Hamiltonian (\ref{eq: Hamiltonian}) is given by
\begin{align}
	H(t,s,u,v,\kappa,w)
	&=x^{T}Qx+u^{T}Ru+\left(\frac{\pl w(t,s)}{\pl x}\right)^{T}\left(Ax+Bu\right)
	+\left(\frac{\pl w(t,s)}{\pl z}\right)^{T}\left(v+\kappa Hx\right)\nonumber\\
	&\ \ \ +\frac{1}{2}\tr\left\{\frac{\pl}{\pl x}\left(\frac{\pl w(t,s)}{\pl x}\right)^{T}\sigma\sigma^{T}\right\}
	+\frac{1}{2}\tr\left\{\frac{\pl}{\pl z}\left(\frac{\pl w(t,s)}{\pl z}\right)^{T}\kappa\gamma\gamma^{T}\kappa^{T}\right\}.
\end{align}
From 
\begin{align}
	\frac{\pl\mb{E}_{p_{t}(x|z)}\left[H\right]}{\pl u}&=2Ru+B^{T}\mb{E}_{p_{t}(x|z)}\left[\frac{\pl w(t,s)}{\pl x}\right],\\
	\frac{\pl\mb{E}_{p_{t}(x|z)}\left[H\right]}{\pl v}&=\mb{E}_{p_{t}(x|z)}\left[\frac{\pl w(t,s)}{\pl z}\right],\\
	\frac{\pl\mb{E}_{p_{t}(x|z)}\left[H\right]}{\pl \kappa}&=\mb{E}_{p_{t}(x|z)}\left[\frac{\pl w(t,s)}{\pl z}x^{T}\right]H^{T}
	+\mb{E}_{p_{t}(x|z)}\left[\frac{\pl}{\pl z}\left(\frac{\pl w(t,s)}{\pl z}\right)^{T}\right]\kappa\gamma\gamma^{T},
\end{align}
the optimal control functions are given by
\begin{align}
	u^{*}(t,z)&=-\frac{1}{2}R^{-1}B^{T}\mb{E}_{p_{t}(x|z)}\left[\frac{\pl w(t,s)}{\pl x}\right],\\
	v^{*}(t,z)&=\begin{cases}
		+\infty&\mb{E}_{p_{t}(x|z)}\left[\frac{\pl w(t,s)}{\pl z}\right]<0,\\
		arbitrary&\mb{E}_{p_{t}(x|z)}\left[\frac{\pl w(t,s)}{\pl z}\right]=0,\\
		-\infty&\mb{E}_{p_{t}(x|z)}\left[\frac{\pl w(t,s)}{\pl z}\right]>0,\\
	\end{cases}\\
	\kappa^{*}(t,z)&=-\left(\mb{E}_{p_{t}(x|z)}\left[\frac{\pl}{\pl z}\left(\frac{\pl w(t,s)}{\pl z}\right)^{T}\right]\right)^{-1}\mb{E}_{p_{t}(x|z)}\left[\frac{\pl w(t,s)}{\pl z}x^{T}\right]H^{T}(\gamma\gamma^{T})^{-1}.
\end{align}

We assume that $p_{t}(s)$ is given by the Gaussian distribution
\begin{align}
	p_{t}(s)=\mcal{N}(s|\mu(t),\Sigma(t)),
	\label{eq: assumption of FP LQG-POSC}
\end{align}
and $w(t,s)$ is given by the quadratic function 
\begin{align}
	w(t,s)&=x^{T}\Psi(t)x+(x-z)^{T}\Phi(t)(x-z)+\beta(t).
	\label{eq: assumption of HJB LQG-POSC}
\end{align}
From the initial condition of the FP equation,   
\begin{align}
	\mu(0)
	&=\left(\begin{array}{c}
		\mu_{x}(0)\\ \mu_{z}(0)\\
	\end{array}\right)
	=\left(\begin{array}{c}
		\mu_{x,0}\\ \mu_{x,0}\\
	\end{array}\right),\\
	\Sigma(0)
	&=\left(\begin{array}{cc}
		\Sigma_{xx}(0)&\Sigma_{xz}(0)\\ 
		\Sigma_{zx}(0)&\Sigma_{zz}(0)\\
	\end{array}\right)
	=\left(\begin{array}{cc}
		\Sigma_{xx,0}&O\\ 
		O&O\\ 
	\end{array}\right)
\end{align}
are satisfied. 
From the terminal condition of the HJB equation,  $\Psi(T)=P$, $\Phi(T)=O$, and $\beta(T)=0$ are satisfied. 
In this case, $u^{*}(t,z)$, $\mb{E}_{p_{t}(x|z)}[\pl w(t,s)/\pl z]$, and $\kappa^{*}(t,z)$ can be calculated as follows: 
\begin{align}
	&u^{*}(t,z)=-R^{-1}B^{T}\left(\left(\Psi+\Phi\right)\mu_{x|z}-\Phi z\right),\\
	&\mb{E}_{p_{t}(x|z)}\left[\frac{\pl w(t,s)}{\pl z}\right]=2\Phi\left(z-\mu_{x|z}\right),\\
	&\kappa^{*}(t,z)=\left(\Sigma_{x|z}+\left(\mu_{x|z}-z\right)\mu_{x|z}^{T}\right)H^{T}(\gamma\gamma^{T})^{-1}.
\end{align}

We then assume that the following equations are satisfied: 
\begin{align}
	\mu_{x}&=\mu_{z},\\
	\Sigma_{zz}&=\Sigma_{xz}.
\end{align}
In this case, $\mu_{x|z}$, $\Sigma_{x|z}$, $u^{*}(t,z)$, $\mb{E}_{p_{t}(x|z)}[\pl w(t,s)/\pl z]$, and $\kappa^{*}(t,z)$ can be calculated as follows: 
\begin{align}
	&\mu_{x|z}=z,\\
	&\Sigma_{x|z}=\Sigma_{xx}-\Sigma_{zz},\\
	&u^{*}(t,z)=-R^{-1}B^{T}\Psi z,\\
	&\mb{E}_{p_{t}(x|z)}\left[\frac{\pl w(t,s)}{\pl z}\right]=0,\\
	&\kappa^{*}(t,z)=\Sigma_{x|z}H^{T}(\gamma\gamma^{T})^{-1}.
\end{align}

Because $v^{*}(t,z)$ is arbitrary when $\mb{E}_{p_{t}(x|z)}[\pl w(t,s)/\pl z]=0$, we formulate $v^{*}(t,z)$ with the following equation: 
\begin{align}
	v^{*}(t,z)=\left(A-BR^{-1}B^{T}\Psi-\Sigma_{x|z}H^{T}(\gamma\gamma^{T})^{-1}H\right)z. 
\end{align}
In this case, the extended state SDE is given by the following equation: 
\begin{align}
	ds_{t}=\tilde{A}(t)s_{t}dt+\tilde{\sigma}(t)d\tilde{\omega}_{t}, 
	\label{eq: extended state SDE LQG-POSC}
\end{align}
where $p_{0}(s)=\mcal{N}(s|\mu(0),\Sigma(0))$, and 
\begin{align}
	\tilde{A}&:=\left(\begin{array}{cc}
		A&-BR^{-1}B^{T}\Psi\\
		\Sigma_{x|z}H^{T}(\gamma\gamma^{T})^{-1}H&A-BR^{-1}B^{T}\Psi-\Sigma_{x|z}H^{T}(\gamma\gamma^{T})^{-1}H\\
	\end{array}\right),\\
	\tilde{\sigma}&:=\left(\begin{array}{cc}
		\sigma&O\\
		O&\Sigma_{x|z}H^{T}(\gamma\gamma^{T})^{-1}\gamma\\
	\end{array}\right),\ 
	d\tilde{\omega}_{t}:=\left(\begin{array}{c}
		d\omega_{t}\\ d\nu_{t}\\
	\end{array}\right).
\end{align}
Because the drift and diffusion coefficients of (\ref{eq: extended state SDE LQG-POSC}) are linear and constant with respect to $s$, respectively, 
$p_{t}(s)$ becomes the Gaussian distribution, which is consistent with our assumption (\ref{eq: assumption of FP LQG-POSC}). 
$\mu(t)$ and $\Sigma(t)$ evolve by the following ordinary differential equations: 
\begin{align}
	\frac{d\mu}{dt}&=\tilde{A}\mu,\\
	\frac{d\Sigma}{dt}&=\tilde{\sigma}\tilde{\sigma}^{T}+\tilde{A}\Sigma+\Sigma\tilde{A}^{T}.
\end{align}
If $\mu_{x}=\mu_{z}$ and $\Sigma_{zz}=\Sigma_{xz}$ are satisfied, $d\mu_{x}/dt=d\mu_{z}/dt$ and $d\Sigma_{xz}/dt=d\Sigma_{zz}/dt$ are satisfied, respectively, which are consistent with our assumptions of $\mu_{x}=\mu_{z}$ and $\Sigma_{zz}=\Sigma_{xz}$. 

From $v^{*}$ and $\kappa^{*}$, the dynamics of $\mu_{x|z}(t,z_{t})=z_{t}$ is given by
\begin{align}
	dz_{t}=\left(A-BR^{-1}B^{T}\Psi\right)z_{t}dt+\Sigma_{x|z}H^{T}(\gamma\gamma^{T})^{-1}\left(dy_{t}-Hz_{t}dt\right),\label{eq: Kalman filter mu in ML-POSC tmp} 
\end{align}
where $z_{0}=\mu_{x,0}$. 
From $d\Sigma_{xx}/dt$ and $d\Sigma_{zz}/dt$, the dynamics of $\Sigma_{x|z}=\Sigma_{xx}-\Sigma_{zz}$ is given by
\begin{align}
	\frac{d\Sigma_{x|z}}{dt}=\sigma\sigma^{T}+A\Sigma_{x|z}+\Sigma_{x|z}A^{T}-\Sigma_{x|z}H^{T}(\gamma\gamma^{T})^{-1}H\Sigma_{x|z},\label{eq: Kalman filter Sigma in ML-POSC tmp}
\end{align}
where $\Sigma_{x|z}(0)=\Sigma_{xx,0}$. 
We note that (\ref{eq: Kalman filter mu in ML-POSC tmp}), (\ref{eq: Kalman filter Sigma in ML-POSC tmp}) corresponds to the Kalman filter (\ref{eq: Kalman filter mu}), (\ref{eq: Kalman filter Sigma}). 

By substituting $w(t,s)$, $u^{*}(t,z)$, $v^{*}(t,z)$, and $\kappa^{*}(t,z)$ into the HJB equation (\ref{eq: HJB eq}), 
we obtain the following ordinary differential equations: 
\begin{align}
	&-\frac{d\Psi}{dt}=Q+A^{T}\Psi+\Psi A-\Psi BR^{-1}B^{T}\Psi,\label{eq: ODE of Psi LQG-POSC tmp}\\
	&-\frac{d\Phi}{dt}=\left(A-\Sigma_{x|z}H^{T}(\gamma\gamma^{T})^{-1}H\right)^{T}\Phi+\Phi\left(A-\Sigma_{x|z}H^{T}(\gamma\gamma^{T})^{-1}H\right)+\Psi BR^{-1}B^{T}\Psi,\label{eq: ODE of Pizz LQG-POSC tmp}\\
	&-\frac{d\beta}{dt}=\tr\left\{\left(\Psi+\Phi\right)\sigma\sigma^{T}\right\}+\tr\left\{\Phi\Sigma_{x|z}H^{T}(\gamma\gamma^{T})^{-1}H\Sigma_{x|z}\right\},\label{eq: ODE of beta LQG-POSC tmp}
\end{align}
where $\Psi(T)=P$, $\Phi(T)=O$, and $\beta(T)=0$. 
If $\Psi(t)$, $\Phi(t)$, and $\beta(t)$ satisfy (\ref{eq: ODE of Psi LQG-POSC tmp}), (\ref{eq: ODE of Pizz LQG-POSC tmp}), and (\ref{eq: ODE of beta LQG-POSC tmp}), respectively, the HJB equation (\ref{eq: HJB eq}) is satisfied, which is consistent with our assumption (\ref{eq: assumption of HJB LQG-POSC}). 
We note that (\ref{eq: ODE of Psi LQG-POSC tmp}) corresponds to the Riccati equation (\ref{eq: ODE of Psi LQG-POSC}). 

\subsection{Proof of Theorem \ref{theo: optimal control of LQG in ML-POSC}}
From Theorem \ref{theo: optimal control of GML-POSC}, the optimal control function $u^{*}$ is given by the minimization of the conditional expected Hamiltonian as follows: 
\begin{align}
	u^{*}(t,z)=\argmin_{u}\mb{E}_{p_{t}(x|z)}\left[H\left(t,s,u,w\right)\right].
\end{align}
In the LQG problem with memory limitation, the Hamiltonian (\ref{eq: Hamiltonian}) is given as follows: 
\begin{align}
	H(t,s,u,w)
	&=s^{T}Qs+u^{T}Ru+\left(\frac{\pl w(t,s)}{\pl s}\right)^{T}\left(As+Bu\right)
	+\frac{1}{2}\tr\left\{\frac{\pl}{\pl s}\left(\frac{\pl w(t,s)}{\pl s}\right)^{T}\sigma\sigma^{T}\right\}.
\end{align}
From 
\begin{align}
	\frac{\pl \mb{E}_{p_{t}(x|z)}\left[H(t,s,u,w)\right]}{\pl u}=2Ru+B^{T}\mb{E}_{p_{t}(x|z)}\left[\frac{\pl w(t,s)}{\pl s}\right],
\end{align}
the optimal control function is given by
\begin{align}
	u^{*}(t,z)
	&=-\frac{1}{2}R^{-1}B^{T}\mb{E}_{p_{t}(x|z)}\left[\frac{\pl w(t,s)}{\pl s}\right].
	\label{eq: optimal control of LQG ver1}
\end{align}

We assume that $p_{t}(s)$ is given by the Gaussian distribution
\begin{align}
	p_{t}(s)=\mcal{N}(s|\mu(t),\Sigma(t)), 
	\label{eq: assumption of FP LQG}
\end{align}
and $w(t,s)$ is given by the quadratic function 
\begin{align}
	w(t,s)=s^{T}\Pi(t)s+\alpha^{T}(t)s+\beta(t).
	\label{eq: assumption of HJB LQG}
\end{align}
From the initial condition of the FP equation, $\mu(0)=\mu_{0}$ and $\Sigma(0)=\Sigma_{0}$ are satisfied. 
From the terminal condition of the HJB equation, $\Pi(T)=P$, $\alpha(T)=0$, and $\beta(T)=0$ are satisfied. 
In this case, the optimal control function (\ref{eq: optimal control of LQG ver1}) can be calculated as follows: 
\begin{align}
	u^{*}(t,z)=-\frac{1}{2}R^{-1}B^{T}\left(2\Pi K\hat{s}+2\Pi\mu+\alpha\right), 
	\label{eq: optimal control of LQG ver3}
\end{align}
where we use (\ref{eq: conditional mean vector of LQG}).  
Because the optimal control function (\ref{eq: optimal control of LQG ver3}) is linear with respect to $\hat{s}$, 
$p_{t}(s)$ is the Gaussian distribution, which is consistent with our assumption (\ref{eq: assumption of FP LQG}). 

By substituting (\ref{eq: assumption of HJB LQG}) and (\ref{eq: optimal control of LQG ver3}) into the HJB equation (\ref{eq: HJB eq}), 
we obtain the following ordinary differential equations: 
\begin{align}
	-\frac{d\Pi}{dt}&=Q+A^{T}\Pi+\Pi A-\Pi BR^{-1}B^{T}\Pi+\mcal{Q},\label{eq: ODE of Pi pre}\\
	-\frac{d\alpha}{dt}&=(A-BR^{-1}B^{T}\Pi)^{T}\alpha-2\mcal{Q}\mu,\label{eq: ODE of alpha}\\
	-\frac{d\beta}{dt}&=\tr(\Pi\sigma\sigma^{T})-\frac{1}{4}\alpha^{T}BR^{-1}B^{T}\alpha+\mu^{T}\mcal{Q}\mu,\label{eq: ODE of beta}
\end{align}
where $\mcal{Q}:=(I-K)^{T}\Pi BR^{-1}B^{T}\Pi (I-K)$. 
If $\Pi(t)$, $\alpha(t)$, and $\beta(t)$ satisfy (\ref{eq: ODE of Pi pre}), (\ref{eq: ODE of alpha}), and (\ref{eq: ODE of beta}), respectively, the HJB equation (\ref{eq: HJB eq}) is satisfied, which is consistent with our assumption (\ref{eq: assumption of HJB LQG}). 

By defining $\Upsilon(t)$ by $\alpha(t)=2\Upsilon(t)\mu(t)$, the optimal control function (\ref{eq: optimal control of LQG ver3}) can be calculated as follows: 
\begin{align}
	u^{*}(t,z)=-R^{-1}B^{T}\left(\Pi K\hat{s}+(\Pi+\Upsilon)\mu\right).
	\label{eq: optimal control of LQG ver4}
\end{align}
In this case, $\mu(t)$ obeys the following ordinary differential equation: 
\begin{align}
	\frac{d\mu}{dt}&=\left(A-BR^{-1}B^{T}(\Pi+\Upsilon)\right)\mu.\label{eq: ODE of mu tmp}
\end{align}
From $\alpha(t)=2\Upsilon(t)\mu(t)$, (\ref{eq: ODE of alpha}) and (\ref{eq: ODE of mu tmp}), $\Upsilon(t)$ obeys the following ordinary differential equation: 
\begin{align}
	-\frac{d\Upsilon}{dt}&=\left(A-BR^{-1}B^{T}\Pi\right)^{T}\Upsilon+\Upsilon\left(A-BR^{-1}B^{T}\Pi\right)-\Upsilon BR^{-1}B^{T}\Upsilon-\mcal{Q},\label{eq: ODE of Upsilon}
\end{align}
where $\Upsilon(T)=O$. 

By defining $\Psi(t):=\Pi(t)+\Upsilon(t)$, the optimal control function (\ref{eq: optimal control of LQG ver4}) can be calculated as follows: 
\begin{align}
	u^{*}(t,z)=-R^{-1}B^{T}\left(\Pi K\hat{s}+\Psi\mu\right).
	\label{eq: optimal control of LQG ver5}
\end{align}
From $\Psi(t)=\Pi(t)+\Upsilon(t)$, (\ref{eq: ODE of Pi pre}) and (\ref{eq: ODE of Upsilon}), $\Psi(t)$ obeys the following ordinary differential equation: 
\begin{align}
	-\frac{d\Psi}{dt}&=Q+A^{T}\Psi+\Psi A -\Psi BR^{-1}B^{T}\Psi,\label{eq: ODE of Psi tmp}
\end{align}
where $\Psi(T)=O$. 
Therefore, the optimal control function (\ref{eq: optimal control of LQG}) is obtained. 

\section*{ACKNOWLEDGMENT}
We thank K. Kashima and K. Ito for useful discussions. 
The first author received a JSPS Research Fellowship (Grant No. 21J20436). 
This work was supported by JSPS KAKENHI (Grant No. 19H05799) and JST CREST (Grant No. JPMJCR2011).

\bibliographystyle{ieeetr}
\bibliography{220321_ML-POSC_ref}
\end{document}